\let\et=\etexdraw
\def\etexdraw{\drawbb\et}
\theoremstyle{plain}
\newtheorem{thm}{Theorem}[section]
\newtheorem{thm*}{Theorem}
\newtheorem{lem}[thm]{Lemma}
\newtheorem{prop}[thm]{Proposition}
\newtheorem{prop*}[thm*]{Proposition}
\newtheorem{cor}[thm]{Corollary}
\theoremstyle{definition}
\newtheorem{defi}[thm]{Definition}
\newtheorem{ex}[thm]{Example}
\theoremstyle{remark}
\DeclareMathOperator{\Coker}{Coker}
\DeclareMathOperator{\Image}{Im}
\DeclareMathOperator{\Hom}{Hom}
\DeclareMathOperator{\Ann}{ann}
\DeclareMathOperator{\Nil}{Nil}
\DeclareMathOperator{\EEop}{E}
\newcommand{\EE}[1]{\EEop\left( #1 \right)}
\begin{document}

\title{Annihilators of Artinian modules compatible with a Frobenius map}

\author{Mordechai Katzman\fnref{fn1}}
\ead{M.Katzman@sheffield.ac.uk}
\address{Department of Pure Mathematics,
University of Sheffield, Hicks Building, Sheffield S3 7RH, United Kingdom}

\author{Wenliang Zhang\fnref{fn2}}
\ead{wzhang15@unl.edu}
\address{Department of Mathematics, University of Nebraska, 203 Avery Hall, Lincoln, NE 68588, USA}


\fntext[fn1]{The first author thankfully acknowledges support from EPSRC grant EP/I031405/1.}

\fntext[fn2]{The second author is supported in part by NSF Grant DMS \#1068946.}



\begin{abstract}
In this paper we consider Artinian modules over power series rings endowed with a Frobenius map.
We describe a method for finding the set of all prime annihilators of submodules which are preserved by the given Frobenius map and on which the
Frobenius map is not nilpotent.
This extends the algorithm by Karl Schwede and the first author, which solved this problem for submodules of the injective hull of the residue field.

The Matlis dual of this problem asks for the radical annihilators of quotients of free modules by submodules preserved by a given Frobenius near-splitting, and the same method solves this dual problem in the $F$-finite case.
\end{abstract}

\begin{keyword}
Frobenius map \sep Frobenius splitting
\end{keyword}
\maketitle


\section{Introduction}\label{Section: Introduction}

This paper describes an algorithm for finding the annihilators of submodules of Artinian modules which are preserved by a given Frobenius map.

Throughout this paper  $R$ will denote a ring of formal power series  over a field $\mathbb{K}$ of prime characteristic $p$, $\mathfrak{m}$ will denote its maximal ideal,
and $E=E_R(R/\mathfrak{m})$ will denote the injective
hull of its residue field. The Frobenius map sending $r\in R$ to its $p$th power will be denoted $f$, and $f^e$ will be its $e$th iteration.

Given any $R$-module $M$ and $e\geq 0$, we may endow $M$ with a new $R$-module structure given by $r \cdot m = r^{p^e} m$ for all
$r\in R$ and $m\in M$ and we denote this new module $F_*^e M$. An \emph{$e$th Frobenius map} on $M$ is an element of
$\Hom_R (M, F^e_* M)$, or, equivalently, an additive map $\phi: M \rightarrow M$ such that $\phi(r m)= r^{p^e} \phi(m)$ for all $r\in R$ and $m\in M$.
Given such a Frobenius map
$\phi \in \Hom_R (M, F^e_* M)$ we call an $R$-submodule $N\subseteq M$ $\phi$-compatible if $\phi(N)\subseteq F^e_* N$.
When discussing the case $e=1$, we shall  drop the $e$ from the notation above.

The aim of this paper is to find the set of radical annihilators of all $\phi$-compatible submodules of a given Artinian $R$-module, or, equivalently
(cf.~Proposition \ref{Proposition: correspondence between Frobenius maps and matrices} below),
given a $\phi \in \Hom_R (E^\alpha, F_* E^\alpha)$ for some positive integer $\alpha$,
to find all radical annihilators of $R$-submodules $N\subseteq E^\alpha$ which satisfy $\phi(N) \subseteq F_* N$.
We shall accomplish this under the assumption that this $\phi$ restricts to a non-zero map on $N$:
in this case the set of radical annihilators is shown to be finite and given by the intersection of all prime ideals in it
(cf.~\cite[Corollary 3.11]{Sharp} and \cite[Section 3]{Enescu-Hochster}.)

This extends the results in \cite{Katzman-Schwede} which describes an algorithm for producing such sets of annihilators when $\alpha=1$.
We shall first describe this algorithm from a more algebraic point of view than in \cite{Katzman-Schwede} and comment on why it cannot be directly
extended for $\alpha>1$.
We shall then give a description of Frobenius maps on $E^\alpha$ in terms of certain matrices and finally we will produce an algorithm which works recursively on
$\alpha$, in which the case $\alpha=1$ treated in \cite{Katzman-Schwede} provides the foundation.

This paper is organized as follows.
Section \ref{Section: Frobenius maps of $E^n$ and their stable submodules} introduces the notion of Frobenius maps and
studies Frobenius maps of Artinian modules using the properties of a version of Matlis duality which keeps track of the
Frobenius maps: these are the functors $\Delta^e$ and $\Psi^e$ described there.

In section \ref{Extending the star-closure} we generalize
two operations which were originally introduced in the context of Frobenius splittings and Frobenius maps in
the injective hulls of residue fields, namely, the $I_e(-)$ operation (denoted $^{[1/p^e]}$ by some authors) and the $\star$-closure.
These are extended from operations on ideals to operations on submodules of free modules, and some of their properties are studied here, e.g., their behaviour under localization.

Section \ref{Section: the case alpha=1} reviews the algorithm in \cite{Katzman-Schwede} for finding prime annihilators
of submodules of the injective hull of the residue field stable under a given Frobenius map,
and presents a proof for its main ingredient in algebraic language.

The main section of this paper, section \ref{Section: the case alpha=1} generalizes the Katzman-Schwede algorithm to deal with prime
annihilators of general Artinian modules endowed with a Frobenius map.

The main result, Theorem \ref{Theorem: Main Theorem}, yields an algorithm which is described in detail in section
\ref{Section: The algorithm in action and two calculations}. We also carry out two calculations following the algorithm to illustrate its use.

Finally, section \ref{Section: Connections with Frobenius near splittings} translates the previous results into the language of Frobenius
near-splittings of  free modules in the case where we work over an $F$-finite ring: in this setup Frobenius maps and near-splittings are dual notions.

\section{Frobenius maps of Artinian modules and their stable submodules}
\label{Section: Frobenius maps of $E^n$ and their stable submodules}

In this section we describe all Frobenius maps on Artinian $R$-modules.
We may  think of $e$th Frobenius maps as left-module structures over the following skew-commutative rings $R[\Theta; f^e]$:
as an $R$-module it is the free module $\displaystyle \oplus_{i=0}^\infty R \Theta^{i}$
and we extend the rule $\Theta r=r^{p^e}\Theta$ for all $r\in R$ to a (non-commutative!) multiplicative structure on $R[\Theta; f^e]$.
Now given an $e$th Frobenius map $\phi$ on an $R$-module $M$, we can turn it into a left
$R[\Theta; f^e]$-module by extending the rule $\Theta m=\phi(m)$ for all $m\in M$.
The fact that this gives $M$ the structure of a left $R[\Theta; f^e]$-module is simply because
for all $r\in R$ and $m\in M$,
$$\Theta (r m) = \phi(rm)=r^{p^e} \phi(m)= r^{p^e} \Theta m = (\Theta r) m .$$
Conversely, if $M$ is a left $R[\Theta; f^e]$-module, then $\Theta: M \rightarrow M$ is an $e$th Frobenius map.

Recall the definition of the \emph{$e$th Frobenius functor}:
the tensoring $(-) \rightarrow F_*^e R \otimes_R (-)$
defines a functor from the category of $R$-modules to the category of $F_*^e R$-modules.
We may now identify the rings $R$ and $F_*^e R$ and to obtain  $e$th Frobenius functor
$F_R^e(-)$
from the category of $R$-modules to itself.

Following \cite{Katzman1} we shall refer to
the category of Artinian $R[\Theta;f^e]$-modules $\mathcal{C_e}$ and
the category $\mathcal{D_e}$  of $R$-linear maps $M \rightarrow F^e_R(M)$ where $M$ is
a finitely generated $R$-module,  and where a morphism between $M\xrightarrow[]{a} F^e_R(M)$ and
$N\xrightarrow[]{b} F^e_R(N)$ is a commutative diagram of $R$-linear maps
\begin{equation*}
\xymatrix{
M \ar@{>}[d]^{a} \ar@{>}[r]^{\mu} & N \ar@{>}[d]^{b}\\
F^e_R(M) \ar@{>}[r]^{F^e_R(\mu)} & F_R(N)\\
} .
\end{equation*}

We also refer to the mutually inverse functors
$\Delta^e: \mathcal{C_e} \rightarrow \mathcal{D}$ and
$\Psi^e: \mathcal{D_e} \rightarrow \mathcal{C}$
also introduced in \cite{Katzman1}.
These are extensions of Matlis duality functors $(-)^\vee=\Hom_R( -, E)$ which, additionally, keep track of Frobenius actions and are defined as follows.
Given an $R[\Theta; f^e]$-module $M$,  $\Delta^e(M)$ is defined (functorially) as
Matlis dual of the $R$-linear map $F^e_* R \otimes_R M \rightarrow M$ given by $r \otimes m \mapsto r \Theta m$
where $(F^e_* R \otimes_R M)^\vee$ is identified with $F^e_R(M^\vee)$ (cf.~\cite[Lemma 4.1]{Lyubeznik}.)
Given an $R$-linear map in $\mathcal{D}_e$, one can reverse the steps of the construction of $\Delta^e$ and obtain functorially
an Artinian module with a Frobenius map defined on it.

As before, we will suppress $e$ from the notation when $e=1$.

\bigskip
Given an Artinian $R$-module $M$ we can embed $M$ in $E^\alpha$ for some $\alpha$ and extend this inclusion to an exact sequence
$$ 0\rightarrow M\rightarrow E^\alpha \xrightarrow[]{A^t} E^\beta \rightarrow \dots$$
where
$A^t\in \Hom_R(E^\alpha, E^\beta)\cong \Hom_R(R^\alpha, R^\beta)$ is
a $\beta\times \alpha$ matrix with entries in $R$.
Proposition \ref{Proposition: correspondence between Frobenius maps and matrices} below shows that the
Frobenius maps on $M$ are restrictions of Frobenius maps on $E^\alpha$ and those can be described in terms of the following
canonical Frobenius map $T: E^\alpha \rightarrow E^\alpha$.

Since $R$ is regular local, $E$ is isomorphic to the module of inverse polynomials $\mathbb{K}[\![ x_1^-, \dots, x_d^- ]\!]$
where $x_1, \dots, x_d$ are minimal generators of the maximal ideal of $R$ (cf.~\cite[\S 12.4]{Brodmann-Sharp}.)
Thus $E$ has a natural $R[T; f]$-module structure
additively extending $T (\lambda x_1^{-\alpha_1} \dots x_1^{-\alpha_d})= \lambda^p x_1^{-p \alpha_1} \dots x_1^{-p \alpha_d}$ for
$\lambda\in \mathbb{K}$ and $\alpha_1, \dots, \alpha_d>0$.
We can further extend this to a natural $R[T; f]$-module structure on $E^\alpha$ given by
$$T \left( \begin{array}{c} a_1 \\ \vdots \\ a_\alpha  \end{array} \right) =  \left( \begin{array}{c} T a_1 \\ \vdots \\ T a_\alpha \end{array} \right) .$$


\begin{prop}\label{Proposition: correspondence between Frobenius maps and matrices}
Let $M=\ker A^t$ be an  Artinian  $R$-module where $A$ is a $\alpha \times \beta$ matrix with entries in $R$.
Let $e\geq 1$ and let $\mathbf{B}$ be the set of $\alpha \times \alpha$ matrices which satisfy
$\Image B A \subseteq \Image A^{[p^e]}$.
For a given $e$th Frobenius map on $M$,
$\Delta^e(M)\in \Hom_R(\Coker A, \Coker A^{[p^e]})$ and is given by multiplication by a matrix  $B$ in $\mathbf{B}$ and, conversely,
any such $B$ defines an $R[\Theta; f^e]$-module structure on $M$ which is given by the restriction to $M$ of the Frobenius map
$\phi: E^\alpha\rightarrow E^\alpha$
defined by $\phi(v)=B^t T^e(v)$ where $T$ is the natural Frobenius map on $E^\alpha$.
\end{prop}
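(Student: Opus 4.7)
The plan is to apply the functor $\Delta^e$ (or rather, run its construction backwards) to translate the question about Frobenius maps on the Artinian module $M$ into a question about $R$-linear maps between finitely generated modules, where matrices appear naturally. The first step is Matlis dualization of the given exact sequence $0\to M\to E^\alpha\xrightarrow{A^t} E^\beta$. Using $E^\vee\cong R$ and the identification $\Hom_R(E^\alpha,E^\beta)\cong\Hom_R(R^\beta,R^\alpha)$ built into the statement, this yields the right-exact sequence $R^\beta\xrightarrow{A}R^\alpha\to M^\vee\to 0$, so $M^\vee=\Coker A$. Applying the $e$th Frobenius functor (which is right exact and sends the matrix $A$ to its $p^e$th power entrywise) gives $F^e_R(M^\vee)=\Coker A^{[p^e]}$.

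By the construction of $\Delta^e$ recalled just before the statement, an $R[\Theta;f^e]$-structure on $M$ corresponds to an $R$-linear morphism $M^\vee\to F^e_R(M^\vee)$, i.e.\ a map $\Coker A\to\Coker A^{[p^e]}$. Since $R^\alpha$ is free and hence projective, any such map lifts to some $R$-linear $R^\alpha\to R^\alpha$, that is, multiplication by an $\alpha\times\alpha$ matrix $B$. The condition that this lift descends to $\Coker A\to\Coker A^{[p^e]}$ is exactly that $B$ carries $\Image A$ into $\Image A^{[p^e]}$, i.e.\ $B\in\mathbf{B}$. This proves the first half and shows that every element of $\mathbf{B}$ arises from some Frobenius structure via $\Delta^e$.

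For the second half I would check directly that the formula $\phi(v)=B^t T^e(v)$ defines an $e$th Frobenius map on $E^\alpha$ (immediate: $T^e$ is one, and multiplication by a matrix over $R$ is $R$-linear) and that $M=\ker A^t$ is $\phi$-stable. Stability rests on the identity $T^e(Cv)=C^{[p^e]}T^e(v)$ for any matrix $C$ over $R$ and any $v\in E^\alpha$, which is just the Frobenius property of $T^e$ applied entrywise. Writing $BA=A^{[p^e]}D$ (guaranteed by $B\in\mathbf{B}$), for $v\in M$ one has
\[
A^t\phi(v)=A^tB^tT^e(v)=(BA)^tT^e(v)=D^t(A^{[p^e]})^tT^e(v)=D^tT^e(A^tv)=0,
\]
so $\phi(M)\subseteq F^e_*M$, as required.

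The main obstacle will be verifying that the specific Frobenius map $\phi|_M$ built from $B$ is indeed the one recovered from $B$ by reversing the construction of $\Delta^e$; the danger is a stray transpose or a sign in the identifications $\Hom_R(E^\alpha,E^\beta)\cong\Hom_R(R^\beta,R^\alpha)$ and $(F^e_*R\otimes_R(-))^\vee\cong F^e_R((-)^\vee)$. I would carry this out by choosing bases, tracing through the definition of $\Delta^e$ applied to $\phi|_M$, and checking that the resulting arrow $\Coker A\to\Coker A^{[p^e]}$ is indeed multiplication by $B$. Once that bookkeeping is in place, the mutually inverse functors $\Delta^e$ and $\Psi^e$ give the bijection asserted in the proposition.
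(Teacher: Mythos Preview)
Your proposal is correct and follows essentially the same route as the paper: dualize to identify $M^\vee=\Coker A$ and $F^e_R(M^\vee)=\Coker A^{[p^e]}$, then lift the map $\Coker A\to\Coker A^{[p^e]}$ through the free module $R^\alpha$ to obtain $B\in\mathbf{B}$. The only difference is that where you sketch a direct verification that $\phi(v)=B^tT^e(v)$ preserves $M$ and matches $\Psi^e$ of the map, the paper simply invokes \cite[Theorem~3.1]{Katzman1} for that identification; your explicit stability computation $A^t\phi(v)=D^tT^e(A^tv)=0$ is a welcome addition, and your caution about stray transposes in the bookkeeping is well placed but not a genuine obstacle.
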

\begin{proof}
Matlis duality gives an exact sequence
$ R^\beta \xrightarrow[]{A} R^\alpha \rightarrow M^\vee \rightarrow 0$ hence
$$\Delta^e(M)\in \Hom_R( M^\vee, F^e_R(M^\vee))=\Hom_R(\Coker A, \Coker A^{[p^e]}).$$

Let $\Delta^e(M)$ be the map $\phi:\Coker A \rightarrow \Coker A^{[p^e]}$.

In view of Theorem 3.1 in \cite{Katzman1} we only need to show that
any such $R$-linear map is given by multiplication by a matrix $B$ in $\mathbf{B}$, and that any such $B$
defines an element in $\Delta^e(M)$.

The freeness of $R^\alpha$ enables us to lift the map $\phi:  \Coker A \rightarrow \Coker A^{[p^e]}$ to a map
$\phi^\prime: R^\alpha \rightarrow R^\alpha$ given by multiplication by some $\alpha \times \alpha$ matrix $B$ in $\mathbf{B}$.
Conversely, any such matrix $B$ defines a map $\phi: \Coker A \rightarrow \Coker A^{[p^e]}$, and $\Psi^e(\phi)$ is a Frobenius map on
$M$ as described in the statement of the proposition.
\end{proof}

In the rest of the paper we shall consider Frobenius actions $\Theta=U T$ on $E^\alpha$ and $R[\Theta^e; f^e]$
submodules $M\subseteq E^\alpha$.
The proposition above shows that for any such $M$ there is a $V\subseteq R^{\alpha}$ such that
$M=\Ann_{E^\alpha} V^t:= \{ z\in E^\alpha \,|\, V^t z=0\}$ and $U V \subseteq V^{[p^e]}$.
This will be henceforth used extensively and implicitly.
For simplicity we adopt the following notation:
given any $V\subseteq R^{\alpha}$ we define $\EE{V}=\Ann_{E^\alpha} V^t$.

\section{Extending the $\star$-closure}
\label{Extending the star-closure}

The purpose of this section is to extend the $\star$-closure operation as first defined in section 5 of \cite{Katzman1}.

\begin{defi}
Let $e\geq 0$.
\begin{enumerate}
  \item[(a)] Given any matrix (or vector) $A$ with entries in $R$, we define $A^{[p^e]}$ to be the matrix obtained from $A$ by raising its
entries to the $p^e$th power.

  \item[(b)] Given any submodule $K\subseteq R^\alpha$, we define $K^{[p^e]}$ to be the $R$-submodule of $R^\alpha$ generated by
$\{ v^{[p^e]} \,|\, v\in K \}$.

\end{enumerate}
\end{defi}

The theorem below extends the $I_e(-)$ operation defined on ideals in \cite[Section 5]{Katzman1}
and in \cite[Definition 2.2]{Blickle-Mustata-Smith} (where it is denoted $(-)^{[1/p^e]}$) to submodules of free $R$-modules.

\begin{thm}
\label{qth root with respect to U}
Let $e\geq 1$.
\begin{enumerate}
  \item[(a)] Given a submodule $K\subseteq R^\alpha$ there exists a minimal submodule $L \subseteq R^\alpha$ for which
  $K\subseteq L^{[p^e]}$. We denote this minimal submodule $I_e (K)$.
  \item[(b)] Let  $U$ be a $\alpha\times \alpha$ matrix with entries in $R$ and let $V\subseteq R^\alpha$.
  The set of all submodules $W \subseteq R^\alpha$ which contain $V$ and which satisfy $U W \subseteq W^{[p^e]}$ has a unique
  minimal element.
\end{enumerate}
\end{thm}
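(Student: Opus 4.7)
The plan proceeds in two stages: first establish (a), which furnishes the $I_e$ operation on submodules of $R^\alpha$; then derive (b) from (a) by an ascending iteration that must terminate because $R^\alpha$ is Noetherian.

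For (a), the starting point is Kunz's theorem: since $R$ is regular, the Frobenius endofunctor $F^e_R$ is flat. Applying this flat functor to the pullback exact sequence
$$0 \to L_1 \cap L_2 \to L_1 \oplus L_2 \to L_1 + L_2 \to 0$$
gives $(L_1 \cap L_2)^{[p^e]} = L_1^{[p^e]} \cap L_2^{[p^e]}$, so the family $\mathcal{F}(K) := \{L \subseteq R^\alpha : K \subseteq L^{[p^e]}\}$ is closed under finite intersection. The plan is then to reduce to the cyclic case $K = Rv$: once existence of $I_e$ on cyclic submodules is established, the identity $I_e(K_1 + K_2) = I_e(K_1) + I_e(K_2)$ follows by minimality (both inclusions are immediate), and since $K$ is finitely generated one can decompose $K = \sum_i R v_i$ and sum the $I_e(R v_i)$. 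The single-generator case reduces, essentially coordinate-wise, to the ideal version of $I_e$ from \cite{Katzman1} and \cite{Blickle-Mustata-Smith}.

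Granted (a), part (b) follows by iteration. The plan is to set $W_0 := V$ and recursively define
$$W_{n+1} := W_n + I_e(U W_n),$$
producing an ascending chain in the Noetherian module $R^\alpha$ that must stabilise at some $W_N$. Stabilisation $W_N = W_{N+1}$ is precisely the condition $I_e(U W_N) \subseteq W_N$, equivalently $U W_N \subseteq W_N^{[p^e]}$ by the defining property of $I_e$; so $W_N \supseteq V$ belongs to the family in question. Minimality is then an induction on $n$: if $W' \supseteq V$ satisfies $U W' \subseteq W'^{[p^e]}$ and $W_n \subseteq W'$, then $U W_n \subseteq W'^{[p^e]}$ forces $I_e(U W_n) \subseteq W'$ by the minimality of $I_e$, whence $W_{n+1} \subseteq W'$; the base case $W_0 = V \subseteq W'$ is trivial.

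The main obstacle is in part (a): passing from the clean finite-intersection formula, which comes for free from Kunz, to the actual construction of $I_e(K)$ for an arbitrary $K$. The reduction through cyclic submodules to the ideal version is, to my mind, the most transparent route. Once (a) is in place, part (b) is a purely formal Noetherian iteration.
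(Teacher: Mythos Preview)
Your argument for (b) is correct and is exactly the paper's proof: iterate $W_{n+1}=W_n+I_e(UW_n)$, use Noetherianity to stabilise, and verify minimality by the same induction you describe.

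The gap is in (a). Closure of $\mathcal{F}(K)$ under \emph{finite} intersections, which is what Kunz/flatness gives you, does not by itself produce a minimal element; and your proposed escape route, reducing the cyclic case ``coordinate-wise'' to the ideal version of $I_e$, does not work as stated. If $v=(v_1,\dots,v_\alpha)^t$ and you form $L=\bigoplus_i I_e(Rv_i)$, then indeed $v\in L^{[p^e]}$, but $L$ need not be minimal: for $v=(1,1)^t\in R^2$ this recipe returns $R^2$, whereas the genuine $I_e(Rv)$ is the line $R\cdot(1,1)$. There is no obvious way to recover $I_e(Rv)\subseteq R^\alpha$ from the ideals $I_e(Rv_i)\subseteq R$, because the minimality condition couples the coordinates. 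If instead you meant that the \emph{proof} for ideals transports verbatim to cyclic submodules, then the detour through additivity and cyclic generators is unnecessary: that same proof already handles arbitrary $K$ directly.

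The paper's route is more economical. It takes $L$ to be the intersection of \emph{all} submodules $M$ with $K\subseteq M^{[p^e]}$ and invokes \cite[Proposition~5.3]{Katzman1}, which says that $(-)^{[p^e]}$ commutes with arbitrary (not just finite) intersections of submodules of $R^\alpha$; this immediately gives $K\subseteq L^{[p^e]}$, and minimality is then tautological. No reduction to the cyclic case, and no separate treatment of ideals, is needed.
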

\begin{proof}
Let $L$ be the intersection of all submodules $M\subseteq R^\alpha$  for which $K\subseteq M^{[p^e]}$.
Proposition 5.3 in \cite{Katzman1} implies that $K\subseteq L^{[p^e]}$ and clearly, $L$ is minimal with this property.

To prove (b) we carry out a construction similar to that in \cite[section 5]{Katzman1}.
Define inductively $V_0=V$ and
$V_{i+1}=I_1(U V_i) + V_i$ for all $i\geq 0$.
The sequence $\{ V_i \}_{i\geq 0}$ must stabilize to some submodule $W=V_j \subseteq R^\alpha$.
Since $W=I_1 (U W) + W$, $I_1 (U W) \subseteq W$ and $U W \subseteq W^{[p]}$.

Let $Z$ be any submodule of $R^\alpha$ containing $V$ for which $U Z \subseteq Z^{[p]}$. We show by induction on $i$ that
$V_i \subseteq Z$ for all $i\geq 0$. Clearly, $V_0=V\subseteq Z$, and if for some $i\geq 0$, $V_i\subseteq Z$ then
$U V_i \subseteq U Z \subseteq Z^{[p]}$ hence $I_1(U V_i) \subseteq Z$ and $V_{i+1} \subseteq Z$. This shows that $W\subseteq Z$.
\end{proof}

\begin{defi}
With notation as in Theorem \ref{qth root with respect to U}, we call the unique minimal submodule in \ref{qth root with respect to U}(b) the \emph{star closure of $V$ with respect to $U$}
and denote it
$V^{\star U}$.
\end{defi}

The effective calculation of the $\star$-closure boils down to the calculation of $I_e$, and this is
a straightforward generalization of the calculation of $I_e$ for ideals. To do so, we first note that if
$R$ is a free $R^p$-module with free basis $\mathcal{B}$ (e.g., when $\dim_{\mathbb{K}^p} \mathbb{K}< \infty$), then
every element $v\in R^\alpha$ can be expressed uniquely in the form $v=\sum_{b\in \mathcal{B}} u_{b}^{[p^e]} b$
where $u_{b}\in R^\alpha$ for all $b\in \mathcal{B}$.

\begin{prop} \label{Proposition: Computing Ie}
Let $e\geq 1$.
\begin{enumerate}
  \item [(a)] For any submodules $V_1, \dots, V_\ell\subseteq R^n$, $I_e(V_1 + \dots + V_\ell)=I_e(V_1)  + \dots + I_e(V_\ell)$.
  \item [(b)] Assume that $R$ is a free $R^p$-module with free basis $\mathcal{B}$ (e.g., when $\dim_{\mathbb{K}^p} \mathbb{K}< \infty$).
  Let $v\in R^\alpha$ and let
  $$v=\sum_{b\in \mathcal{B}} u_{b}^{[p^e]} b $$
  be the unique expression for $v$ where $u_{b}\in R^\alpha$ for all
  $b\in \mathcal{B}$. Then  $I_e(R v)$ is the submodule $W$ of $R^\alpha$ generated by $\{ u_b  \,|\, b\in \mathcal{B} \}$.
  \end{enumerate}
\end{prop}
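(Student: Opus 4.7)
The plan is to handle (a) by a direct minimality argument and (b) by exploiting the uniqueness of the $\mathcal{B}$-expansion to read off generators of the minimal submodule.

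For (a), I would establish both inclusions from Theorem \ref{qth root with respect to U}(a). On the one hand, each $V_i$ lies in $I_e(V_i)^{[p^e]}$, so $\sum_i V_i$ lies in $\sum_i I_e(V_i)^{[p^e]} = \bigl(\sum_i I_e(V_i)\bigr)^{[p^e]}$; minimality then gives $I_e(\sum_i V_i) \subseteq \sum_i I_e(V_i)$. On the other hand, each $V_i \subseteq \sum_j V_j \subseteq I_e(\sum_j V_j)^{[p^e]}$, and minimality of $I_e(V_i)$ forces $I_e(V_i) \subseteq I_e(\sum_j V_j)$. Summing over $i$ finishes the reverse inclusion.

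For (b), let $W$ denote the submodule of $R^\alpha$ generated by $\{u_b : b \in \mathcal{B}\}$. The inclusion $I_e(Rv) \subseteq W$ is immediate from the defining expression: each $u_b^{[p^e]}$ lies in $W^{[p^e]}$, so $v = \sum_b u_b^{[p^e]} b$ lies in $W^{[p^e]}$, and minimality yields the claim. For the reverse inclusion, I would take any submodule $L \subseteq R^\alpha$ with $v \in L^{[p^e]}$, write $v = \sum_k r_k \ell_k^{[p^e]}$ with $r_k \in R$ and $\ell_k \in L$, and expand each coefficient in the basis as $r_k = \sum_b c_{k,b}^{p^e} b$. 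Substituting and regrouping,
\[
v = \sum_b \Bigl(\sum_k c_{k,b}\, \ell_k\Bigr)^{[p^e]} b,
\]
and the uniqueness of the $\mathcal{B}$-expansion of $v$ forces $u_b = \sum_k c_{k,b}\, \ell_k \in L$. Hence $W \subseteq L$, and taking $L = I_e(Rv)$ gives $W \subseteq I_e(Rv)$.

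The main obstacle is bookkeeping around the hypothesis: $\mathcal{B}$ is stated as a free basis for $R$ over $R^p$, whereas the expression $v = \sum_b u_b^{[p^e]} b$ demands a free basis of $R$ over $R^{p^e}$. I would address this by iterating the freeness (products of elements of the $R^p$-basis raised to appropriate $p$-powers form an $R^{p^e}$-basis), so the uniqueness invoked in the paragraph above is justified. Everything else is a routine matching of coefficients.
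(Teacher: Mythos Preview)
Your argument is correct and mirrors the paper's proof essentially step for step: both inclusions in (a) are obtained exactly as you describe, and for (b) the paper likewise writes $v=\sum_i r_i w_i^{[p^e]}$ with $w_i$ in an arbitrary submodule containing $v$ in its $[p^e]$-bracket, expands each $r_i$ in the basis $\mathcal{B}$, and compares coefficients. Your remark about passing from an $R^p$-basis to an $R^{p^e}$-basis is a reasonable clarification the paper leaves implicit.
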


\begin{proof}
The proof of this proposition is a straightforward modification of the proofs of propositions 5.2 and 5.6 in \cite{Katzman1}
and Lemma 2.4 in \cite{Blickle-Mustata-Smith}.

Clearly, $I_e(V_1 + \dots + V_\ell)\supseteq I_e(V_i)$ for all $1\leq i\leq \ell$, hence $I_e(V_1 + \dots + V_\ell)\supseteq I_e(V_1)  + \dots + I_e(V_\ell)$.
On the other hand
$$(I_e(V_1)  + \dots + I_e(V_\ell))^{[p^e]} = I_e(V_1)^{[p^e]}  + \dots + I_e(V_\ell)^{[p^e]} \supseteq V_1+\dots+ V_\ell $$
and the minimality of $I_e(V_1 + \dots + V_\ell)$ implies that
$I_e(V_1 + \dots + V_\ell)\subseteq I_e(V_1)  + \dots + I_e(V_\ell)$ and (a) follows.

Clearly $v\in W^{[p^e]}$, and so $I_e(R v) \subseteq W$. On the other hand, let $W$ be a submodule of $R^\alpha$ such that $v\in W^{[p^e]}$.
Write $v = \sum_{i=1}^s r_i w_i^{[p^e]}$ for $r_i\in R$ and $w_i\in W$ for all $1\leq i \leq s$, and for each such $i$
write $r_i=\sum_{b\in \mathcal{B}} r_{b i}^{p^e} b$ where $r_{b i}\in R$ for all $b\in \mathcal{B}$.
Now
$$ \sum_{b\in \mathcal{B}} u_{b}^{[p^e]} b = v = \sum_{b\in \mathcal{B}} \left(\sum_{i=1}^s r_{b i}^{p^e} w_i^{[p^e]} \right) b $$
and since these are direct sums, we compare coefficients and obtain
$u_{b}^{[p^e]} = \left(\sum_{i=1}^s r_{b i}^{p^e} w_i^{[p^e]} \right)$ for all $b\in \mathcal{B}$
and so
$u_{b} = \left(\sum_{i=1}^s r_{b i} w_i \right)$ for all $b\in \mathcal{B}$
hence
$u_{b} \in W$ for all $b\in \mathcal{B}$.
\end{proof}

\bigskip
\begin{lem}[cf.~\cite{Murru}]\label{Lemma: Ie and localization}
Let $\mathcal{S}\subset R$ be a multiplicative set, and let $W\subseteq R^\alpha$.
For all $e\geq 1$, $I_e(\mathcal{S}^{-1} W)$ exists and equals $\mathcal{S}^{-1} I_e(W)$.
\end{lem}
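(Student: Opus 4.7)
The plan is to prove each inclusion separately, using the basic identity $(\mathcal{S}^{-1}N)^{[p^e]} = \mathcal{S}^{-1}(N^{[p^e]})$ for any $R$-submodule $N\subseteq R^\alpha$, which is verified on generators via $(v/s)^{[p^e]} = v^{[p^e]}/s^{p^e}$. The inclusion $I_e(\mathcal{S}^{-1}W) \subseteq \mathcal{S}^{-1}I_e(W)$ follows immediately from minimality: localizing $W \subseteq I_e(W)^{[p^e]}$ gives $\mathcal{S}^{-1}W \subseteq (\mathcal{S}^{-1}I_e(W))^{[p^e]}$, and the minimality characterizing $I_e(\mathcal{S}^{-1}W)$ -- whose existence comes from Theorem \ref{qth root with respect to U}(a) applied in the regular local ring $\mathcal{S}^{-1}R$ -- forces the containment.

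For the reverse $\mathcal{S}^{-1}I_e(W) \subseteq I_e(\mathcal{S}^{-1}W)$, I would first reduce to the cyclic case $W = Rw$ via additivity of $I_e$ (Proposition \ref{Proposition: Computing Ie}(a)) together with compatibility of localization with sums. Since $R$ is a domain, $R^\alpha$ embeds in $(\mathcal{S}^{-1}R)^\alpha$, so the $R$-submodule $L := \{v \in R^\alpha : v/1 \in I_e(\mathcal{S}^{-1}Rw)\}$ of $R^\alpha$ satisfies $\mathcal{S}^{-1}L = I_e(\mathcal{S}^{-1}Rw)$. From $w/1 \in I_e(\mathcal{S}^{-1}Rw)^{[p^e]} = \mathcal{S}^{-1}(L^{[p^e]})$ one extracts some $s \in \mathcal{S}$ with $sw \in L^{[p^e]}$; then $I_e(Rsw) \subseteq L$ by minimality, and therefore $\mathcal{S}^{-1}I_e(Rsw) \subseteq \mathcal{S}^{-1}L = I_e(\mathcal{S}^{-1}Rw)$.

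The main obstacle will then be the remaining claim $\mathcal{S}^{-1}I_e(Rsw) = \mathcal{S}^{-1}I_e(Rw)$ for $s \in \mathcal{S}$. The inclusion $\subseteq$ is immediate from $Rsw \subseteq Rw$; for the reverse one exploits the fact that $s$ is a unit in $\mathcal{S}^{-1}R$. In the $F$-finite case this is transparent from the explicit description in Proposition \ref{Proposition: Computing Ie}(b): writing $w = \sum_b u_b^{[p^e]} b$ in a free basis $\mathcal{B}$ of $R$ over $R^{p^e}$ and similarly $sw = \sum_b (sw)_b^{[p^e]} b$, the $(sw)_b$ are explicit $R$-combinations of the $u_b$, and conversely each $u_b$ lies in $\mathcal{S}^{-1}R\cdot\{(sw)_b\}_b$ after clearing denominators by a power of $s$. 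The non-$F$-finite case follows the analogous pattern used for ideals in \cite{Murru}, transposed componentwise from ideals of $R$ to submodules of $R^\alpha$.
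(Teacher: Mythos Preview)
Your approach differs substantially from the paper's and carries two genuine issues.

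First, a factual slip: $\mathcal{S}^{-1}R$ is regular but generally \emph{not} local (e.g.\ $R=\mathbb{K}[\![x,y]\!]$, $\mathcal{S}=\{(xy)^n\}$), so you cannot cite Theorem~\ref{qth root with respect to U}(a) as stated---that theorem is proved for the fixed power series ring $R$. The underlying argument (flatness of Frobenius over regular Noetherian rings gives $(\bigcap M_i)^{[p^e]}=\bigcap M_i^{[p^e]}$) does extend, but you would have to say so. The paper sidesteps this entirely: rather than assuming $I_e$ exists over $\mathcal{S}^{-1}R$, it \emph{constructs} $I_e(\mathcal{S}^{-1}W)$ as $\mathcal{S}^{-1}I_e(\mathcal{S}^{-1}W\cap R^\alpha)$, verifying minimality by contracting any competitor $\mathcal{S}^{-1}W_1$ back to $R^\alpha$ and using $I_e$ only where it is already available.

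Second, your reverse inclusion is much more laborious than necessary and is left incomplete in the non-$F$-finite case. The key claim $\mathcal{S}^{-1}I_e(Rsw)=\mathcal{S}^{-1}I_e(Rw)$ can in fact be handled uniformly via $I_e(s^{p^e}Rw)=s\,I_e(Rw)$ (use flatness of Frobenius to get $(L:_{\!R^\alpha} s)^{[p^e]}=(L^{[p^e]}:_{\!R^\alpha} s^{p^e})$), but you do not supply this. By contrast, once the paper has identified $I_e(\mathcal{S}^{-1}W)$ with $\mathcal{S}^{-1}I_e(\mathcal{S}^{-1}W\cap R^\alpha)$, the inclusion $\supseteq \mathcal{S}^{-1}I_e(W)$ is immediate from $W\subseteq \mathcal{S}^{-1}W\cap R^\alpha$ and monotonicity of $I_e$---no cyclic reduction, no basis computations, no case split on $F$-finiteness. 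Your easy direction $I_e(\mathcal{S}^{-1}W)\subseteq\mathcal{S}^{-1}I_e(W)$ agrees with the paper's, so the real lesson is that the contraction-to-$R^\alpha$ trick makes the hard direction nearly free.
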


\begin{proof}
We first note that $I_e(\mathcal{S}^{-1} W \cap R^\alpha)^{[p^e]} \supseteq \mathcal{S}^{-1} W \cap R^\alpha$ hence
$\mathcal{S}^{-1} I_e(\mathcal{S}^{-1}W  \cap R^\alpha)^{[p^e]} \supseteq \mathcal{S}^{-1}(\mathcal{S}^{-1}W \cap R^\alpha) = \mathcal{S}^{-1}W$.

Let $W_1\subseteq R^\alpha$ be another module for which $\mathcal{S}^{-1} W_1 ^{[p^e]} \supseteq \mathcal{S}^{-1} W$; we have
$$\left(\mathcal{S}^{-1} W_1  \cap R^\alpha\right)^{[p^e]} =\mathcal{S}^{-1} W_1 ^{[p^e]} \cap R^\alpha  \supseteq
\mathcal{S}^{-1} W \cap R^\alpha$$
hence
$I_e(\mathcal{S}^{-1} W \cap R^\alpha) \subseteq \mathcal{S}^{-1} W_1  \cap R^\alpha$ and
$\mathcal{S}^{-1} I_e(\mathcal{S}^{-1} W \cap R^\alpha) \subseteq \mathcal{S}^{-1}\left(\mathcal{S}^{-1} W_1  \cap R^\alpha\right)=\mathcal{S}^{-1} W_1$.

We can now conclude that $I_e(\mathcal{S}^{-1} W)$ exists and equals $\mathcal{S}^{-1} I_e(\mathcal{S}^{-1} W \cap R^\alpha)$.

We now have $I_e(\mathcal{S}^{-1} W)=\mathcal{S}^{-1} I_e(\mathcal{S}^{-1} W \cap R^\alpha) \supseteq \mathcal{S}^{-1} I_e(W)$, and we finish the proof by
showing that  $I_e(\mathcal{S}^{-1} W) \subseteq \mathcal{S}^{-1} I_e(W)$. This last inclusion is equivalent to
$\mathcal{S}^{-1} W \subseteq \left(\mathcal{S}^{-1} I_e(W) \right)^{[p^e]}$
and this follows from the fact that $W\subseteq I_e(W)^{[p^e]}$.
\end{proof}

The existence of the $I_e(-)$ operation in localizations of $R^\alpha$ allows us to define ${}^{\star U}$ operations on submodules
of these localizations in an identical way to its definition for submodules of $R^\alpha$.
We shall use these later in Section \ref{Section: the case alpha>1}.

\bigskip
Throughout the rest of this section we fix a Frobenius map $\Theta=U^t T : E^\alpha \rightarrow  E^\alpha$ where
$U$ is an $\alpha \times \alpha$ matrix with entries in $R$. Recall that, given any $V\subseteq R^{\alpha}$ we use $\EE{V}$ to denote $\Ann_{E^\alpha} V^t$. We will collect some properties of $\EE{V}$ which will be used later in Section \ref{Section: the case alpha>1}.

\begin{lem}[cf.~Theorem 4.7 in \cite{Katzman1}]\label{Lemma: Zero action submodule}
The $R[\Theta; f]$-submodule
$Z=\{ a\in E^\alpha \,|\, \Theta^e a =0 \}$ is
given by
$$\EE{ I_e(\Image U^{[p^{e-1}]} U^{[p^{e-2}]} \dots U )} .$$
\end{lem}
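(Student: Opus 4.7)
The plan is to first compute $\Theta^e$ explicitly and then recognize $Z$ as the preimage under $T^e$ of an $\EE{-}$-submodule; the lemma then reduces to a general compatibility between $T^e$ and the $I_e(-)$ operation. Since $\Theta = U^t T$ and the natural Frobenius $T$ on $E^\alpha$ obeys $Tr=r^pT$ for every $r\in R$ (and hence $TA=A^{[p]}T$ for any matrix $A$ with entries in $R$), induction on $e$ yields $\Theta^e = N^t T^e$ where $N = U^{[p^{e-1}]}U^{[p^{e-2}]}\cdots U$. Because $N^t T^e a = 0$ is equivalent to $T^e a$ being annihilated by every column of $N$, we have $Z = \{a\in E^\alpha : T^e a\in\EE{\Image N}\}$, so the lemma reduces to the general claim
\[
\{\,a\in E^\alpha : T^e a\in\EE{V}\,\} \;=\; \EE{I_e(V)}
\]
for an arbitrary submodule $V\subseteq R^\alpha$, applied with $V=\Image N$.

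The inclusion $\EE{I_e(V)}\subseteq\{a:T^e a\in\EE{V}\}$ is the easy direction: writing any $v\in V\subseteq I_e(V)^{[p^e]}$ as $v=\sum_j r_j w_j^{[p^e]}$ with $w_j\in I_e(V)$ gives $v^t T^e a = \sum_j r_j T^e(w_j^t a) = 0$ whenever $a\in\EE{I_e(V)}$. For the reverse inclusion, the additivity of $I_e$ from Proposition~\ref{Proposition: Computing Ie}(a) together with the elementary fact $\EE{V_1+V_2}=\EE{V_1}\cap\EE{V_2}$ reduces the task to the principal case $V=Rv$ for a single $v\in R^\alpha$.

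In the $F$-finite setting I would fix a free basis $\mathcal{B}$ of $R$ over $R^{p^e}$ consisting of products $k\cdot x^I$ with $k$ from a basis of $\mathbb{K}$ over $\mathbb{K}^{p^e}$ and $0\leq I_i<p^e$, and write $v=\sum_{b\in\mathcal{B}} b\, u_b^{[p^e]}$ with $u_b\in R^\alpha$. By Proposition~\ref{Proposition: Computing Ie}(b), $I_e(Rv)$ is then generated by the $u_b$, and a direct computation gives $v^t T^e a = \sum_b b\cdot T^e(u_b^t a)$.

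The main obstacle is to show that this sum can vanish only if each summand does. This is where the inverse-polynomial description of $E$ enters: each monomial $x^N$ with $N\leq -1$ componentwise admits a unique decomposition $N = I - p^e J$ with $0\leq I < p^e$ and $J\geq 1$, and one checks that the coefficient of $x^N$ in $\sum_b b\cdot T^e(u_b^t a)$ becomes a linear combination of terms $k\cdot c^{p^e}$ indexed by the basis of $\mathbb{K}$ over $\mathbb{K}^{p^e}$ (with $c\in\mathbb{K}$). Its vanishing for every $N$ forces every such $c=0$, so $T^e(u_b^t a)=0$ for each $b$, and then $u_b^t a = 0$ by the injectivity of $T^e$ on $E$ (an immediate consequence of $\mathbb{K}$ being a reduced ring). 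Hence every generator of $I_e(Rv)$ annihilates $a$, i.e., $a\in\EE{I_e(Rv)}$; the non-$F$-finite case can be recovered either by a faithfully flat base change or by invoking Theorem~4.7 of \cite{Katzman1} componentwise after the reduction to principal $V$.
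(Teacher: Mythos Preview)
Your argument is correct, but it follows a genuinely different route from the paper's. The paper does not compute with inverse polynomials at all: it simply writes $Z=\EE{W}$, views the inclusion $Z\subseteq E^\alpha$ as a morphism of $R[\Theta^e;f^e]$-modules, and applies the duality functor $\Delta^e$ of Section~\ref{Section: Frobenius maps of $E^n$ and their stable submodules}. Under $\Delta^e$ the vanishing of $\Theta^e$ on $Z$ becomes the vanishing of the induced map $U^{[p^{e-1}]}\cdots U:R^\alpha/W\to R^\alpha/W^{[p^e]}$, i.e.\ $\Image(U^{[p^{e-1}]}\cdots U)\subseteq W^{[p^e]}$; since $Z$ is the \emph{largest} submodule killed by $\Theta^e$, duality forces $W$ to be the \emph{smallest} submodule with this property, which is $I_e(\Image U^{[p^{e-1}]}\cdots U)$ by definition. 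This is a three-line proof that works uniformly, with no $F$-finiteness hypothesis.

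Your approach, by contrast, unwinds everything explicitly: you reduce to the identity $\{a:T^e a\in\EE{V}\}=\EE{I_e(V)}$, then to principal $V$, and finally verify by hand in the inverse-polynomial model that $\sum_b b\,T^e(c_b)=0$ in $E$ forces all $c_b=0$. This is essentially a direct proof of the special case of the $\Delta^e$/$\Psi^e$ correspondence that is needed here, and it has the merit of being self-contained and elementary. The cost is that it is longer, and your treatment of the non-$F$-finite case is only sketched: the faithfully-flat base-change argument works but needs a word about why $I_e$ is compatible with such a base change, and ``invoking Theorem~4.7 of \cite{Katzman1} componentwise'' is not quite right as stated, since after reducing to $V=Rv$ with $v\in R^\alpha$ you are still not in the $\alpha=1$ situation of that theorem---what you actually need from \cite{Katzman1} is the $E$-level fact that the natural map $F_*^eR\otimes_R E\to E$, $r\otimes e\mapsto rT^e(e)$, is injective.
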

\begin{proof}
Write $Z=\EE{ W}$ for some $R$-submodule $W\subseteq R^\alpha$.
We may view $E^\alpha$ and $Z$ as $R[\Theta^e; f^e]$-modules and an application
$\Delta^e$ to the inclusion $Z\subseteq E^{\alpha}$ gives a commutative diagram with exact rows
\begin{equation*}\label{CD1}
\xymatrix{
R^\alpha \ar@{>}[d]_{U^{[p^{e-1}]} U^{[p^{e-2}]} \cdots U } \ar@{>}[r]^{}   & R^\alpha/W \ar@{>}[d]^{U^{[p^{e-1}]} U^{[p^{e-2}]} \cdots U } \ar@{>}[r]^{} &  0\\
R^\alpha \ar@{>}[r]^{}                  & R^\alpha/W^{[p^e]} \ar@{>}[r]^{}          &  0\\
}
\end{equation*}
where the right-most vertical map is zero.
We deduce that $W$ is the smallest submodule of $R^\alpha$ for which the rightmost vertical map is zero, i.e.,
$W=I_e( \Image U^{[p^{e-1}]} U^{[p^{e-2}]} \cdots U )$.

\end{proof}

\begin{lem}[cf.~Theorem 4.8 in \cite{Katzman3}]\label{Lemma: I1(uK)}
Let $K\subseteq R^\alpha$ and assume that $\EE{K}$ is an $R[\Theta; f]$-module.
The $R[\Theta; f]$-module
$M=\{ z\in E^\alpha \,|\, \Theta z \in \EE{K} \}$ is $\EE{I_1(U K)}$.
\end{lem}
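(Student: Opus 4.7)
The plan is to reduce the statement to a general property of the natural Frobenius $T$. Since $\Theta = U^{t} T$, unwinding the membership condition gives
\[
M = \{ z \in E^{\alpha} : U^{t} T(z) \in \EE{K}\} = \{ z : K^{t} U^{t} T(z) = 0\} = \{z : (UK)^{t} T(z) = 0\} = T^{-1}(\EE{UK}).
\]
So it suffices to establish the ``key identity'' $T^{-1}(\EE{W}) = \EE{I_{1}(W)}$ for any submodule $W \subseteq R^{\alpha}$, and apply it with $W = UK$.

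For the inclusion $\EE{I_{1}(W)} \subseteq T^{-1}(\EE{W})$, I would use the defining property $W \subseteq I_{1}(W)^{[p]}$ from Theorem \ref{qth root with respect to U}(a). Writing an arbitrary $w \in W$ as $w = \sum r_{i} v_{i}^{[p]}$ with $v_{i} \in I_{1}(W)$ and $r_{i} \in R$, and using the pointwise identity $(v^{[p]})^{t} T(z) = T(v^{t} z)$ (which is just the fact that $T$ raises coordinates to the $p$-th power in the inverse-polynomial representation), I get $w^{t} T(z) = \sum r_{i} T(v_{i}^{t} z) = 0$ whenever $z \in \EE{I_{1}(W)}$.

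For the reverse inclusion $T^{-1}(\EE{W}) \subseteq \EE{I_{1}(W)}$, take $z$ with $T(z) \in \EE{W}$ and set $J_{z} = \{v \in R^{\alpha} : v^{t} z = 0\}$. By the minimality characterization of $I_{1}(W)$, the containment $I_{1}(W) \subseteq J_{z}$ is equivalent to $W \subseteq J_{z}^{[p]}$, so I need to verify that every $w \in W$, satisfying $w^{t} T(z) = 0$, actually lies in $J_{z}^{[p]}$. This is precisely the assertion that the $R^{\alpha}$-annihilator of $T(z) \in E^{\alpha}$ equals $J_{z}^{[p]}$.

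The main obstacle is this last annihilator equality. I would prove it by mimicking the Matlis-dual argument used in the previous lemma: apply the functor $\Delta$ to the $R[\Theta;f]$-submodule generated by $z$ inside $E^{\alpha}$. Under $\Delta$, the Frobenius map $T : E^{\alpha} \to F_{*}E^{\alpha}$ corresponds to the identity $R^{\alpha} \to F^{\ast}R^{\alpha}$ (cf.~\cite[Lemma 4.1]{Lyubeznik}), while the cyclic $R$-submodule $Rz \subseteq E^{\alpha}$ dualizes to the quotient $R^{\alpha}/J_{z}$. Chasing the resulting commutative diagram — whose bottom row is obtained from the top by applying the Frobenius functor — produces the identification $\Ann_{R^{\alpha}} T(z) = J_{z}^{[p]}$, completing the argument. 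This is the same mechanism exploited in Theorem 4.8 of \cite{Katzman3}, to which the statement is a straightforward $\alpha \geq 1$ generalization.
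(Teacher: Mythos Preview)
Your reduction $M = T^{-1}\!\bigl(\EE{UK}\bigr)$ and the general identity $T^{-1}\!\bigl(\EE{W}\bigr) = \EE{I_1(W)}$ are correct, and this is a somewhat different organisation from the paper's. The paper writes $M=\EE{V}$, applies $\Delta$ to the short exact sequence $0\to\EE{K}\to\EE{V}\to\EE{V}/\EE{K}\to 0$ of $R[\Theta;f]$-modules, observes that the induced Frobenius on the quotient is zero, and reads off that $U:K/V\to K^{[p]}/V^{[p]}$ must vanish with $V$ minimal, i.e.\ $V=I_1(UK)$. Your route is more elementwise and in fact proves the set equality $M=\EE{I_1(UK)}$ without using the hypothesis that $\EE{K}$ is $\Theta$-stable; that hypothesis is only needed for $M$ to be an $R[\Theta;f]$-submodule.

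One step needs correcting. Your sketch of $\Ann_{R^\alpha}\bigl(T(z)\bigr)=J_z^{[p]}$ says ``the cyclic $R$-submodule $Rz\subseteq E^\alpha$ dualizes to $R^\alpha/J_z$''; this is not right, since $Rz\cong R/\Ann_R(z)$ and its Matlis dual is again $R/\Ann_R(z)$, not $R^\alpha/J_z$. The object you actually want is the $R$-linear map $R^\alpha\to E$, $v\mapsto v^t z$, whose kernel is $J_z$ by definition. Applying the Frobenius functor $F_*R\otimes_R(-)$, which is exact because $R$ is regular, and using the isomorphism $F_*R\otimes_R E\xrightarrow{\sim} E$, $r\otimes w\mapsto rT(w)$ (this is the content of \cite[Lemma~4.1]{Lyubeznik}), turns this into the map $v\mapsto v^t T(z)$ with kernel $F_R(J_z)=J_z^{[p]}$. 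So no appeal to $\Delta$ on an $R[\Theta;f]$-submodule is needed for this step; plain flatness of Frobenius does the job. With that fix your argument goes through.
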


\begin{proof}
Since $\EE{K}$ is an $R[\Theta; f]$-module, $\EE{K}\subseteq M$.
Write $M=\EE{V}$ for some $V\subseteq R^\alpha$ and apply $\Delta^1$ to the short exact sequence
$0 \rightarrow  \EE{K}  \rightarrow \EE{V} \rightarrow  \EE{V}/  \EE{K} \rightarrow 0$
of $R[\Theta; f]$-modules to obtain the following commutative diagram with exact rows
\begin{equation*}
\xymatrix{
0  \ar@{>}[r]^{} & K/V  \ar@{>}[r]^{}  \ar@{>}[d]^{U}   &  R^\alpha/V  \ar@{>}[r]^{}  \ar@{>}[d]^{U}  &  R^\alpha/K  \ar@{>}[r]^{}  \ar@{>}[d]^{U}  & 0 \\
0  \ar@{>}[r]^{} & K^{[p]}/V^{[p]}  \ar@{>}[r]^{}       &  R^\alpha/V^{[p]}  \ar@{>}[r]^{}            &  R^\alpha/K^{[p]}  \ar@{>}[r]^{}            & 0 \\
}
\end{equation*}
and $V$ is the smallest submodule of $R^\alpha$ on which the leftmost vertical map vanishes, i.e., $V=I_1(U K)$.
\end{proof}

\begin{lem}\label{Lemma: associated of special is special}
Let $\EE{W}$ be a $R[\Theta; f]$-submodule, where $W\subseteq R^\alpha$.
Write $J=(0 :_R \EE{W})=(0 :_R R^\alpha/W)$ and let $Q$ be an associated prime of $J$.
There exists an $R$-submodule $\widehat{W}\subseteq R^\alpha$ such that $\EE{\widehat{W}}$ is a $R[\Theta; f]$-submodule
and
$(0 :_R \EE{\widehat{W}})=(0 :_R R^\alpha/\widehat{W})=Q$.
\end{lem}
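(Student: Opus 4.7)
The plan is to construct $\widehat{W}$ so that $\EE{\widehat{W}} = r\cdot \EE{W}$ for a carefully chosen element $r\in R$. Since $Q$ is an associated prime of $J=(0:_R \EE{W})$, by definition there is an $r\in R\setminus J$ with $(J :_R r) = Q$; this single element drives the entire construction. Writing $M := \EE{W}$ and considering the $R$-submodule $rM\subseteq E^\alpha$, the annihilator computation is automatic: $s\in \Ann_R(rM)$ iff $srm = 0$ for every $m\in M$ iff $sr\in \Ann_R(M)=J$ iff $s\in (J:_R r)=Q$, so $\Ann_R(rM)=Q$.

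The only substantive step, and the place where the Frobenius structure actually matters, is checking that $rM$ is stable under $\Theta$. Since $\Theta$ is $p$-linear, for every $z\in M$ one has
\[ \Theta(rz) \;=\; r^p\,\Theta(z) \;=\; r\cdot\bigl(r^{p-1}\Theta(z)\bigr) \;\in\; r\cdot M \;=\; rM, \]
using $\Theta(z)\in M$ (by the hypothesis that $\EE{W}$ is $R[\Theta;f]$-stable) and the fact that $M$ is an $R$-module. The identity $r^p = r\cdot r^{p-1}$ is really the whole trick: it rewrites a $p$th power as a multiple of $r$, so any principal multiple of a $\Theta$-stable submodule is automatically $\Theta$-stable.

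To finish, Matlis duality between submodules of the Artinian module $E^\alpha$ and submodules of $R^\alpha$ produces a unique $\widehat{W}\subseteq R^\alpha$ with $\EE{\widehat{W}}=rM$, and the equality $(0:_R \EE{\widehat{W}}) = (0:_R R^\alpha/\widehat{W})$ follows from this duality (exactly as the statement of the lemma already invokes for $\EE{W}$ itself). I do not anticipate any real obstacle; the argument is a short combination of the definition of an associated prime with $p$-linearity of $\Theta$, and in particular no iterative star-closure construction is needed here.
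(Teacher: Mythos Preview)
Your argument is correct. Both your proof and the paper's hinge on the same elementary observation, namely that $r^p = r\cdot r^{p-1}$ lets one absorb a $p$th power back into a principal multiple; the difference is only which side of Matlis duality the computation is carried out on.

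The paper works on the $R^\alpha$ side: it sets $\widehat{W} = (W :_{R^\alpha} a)$ for an $a\in R$ with $(J:a)=Q$, and then checks directly that $U\widehat{W}\subseteq \widehat{W}^{[p]}$ via
\[
a^p\, U\widehat{W}\ \subseteq\ a^{p-1} U W\ \subseteq\ a^{p-1} W^{[p]}\ \subseteq\ W^{[p]},
\]
whence $U\widehat{W}\subseteq (W^{[p]}:a^p)=(W:a)^{[p]}$. Your construction is exactly the Matlis dual of this: one has $a\,\EE{W}=\EE{(W:_{R^\alpha} a)}$, and your $p$-linearity computation $\Theta(az)=a\cdot a^{p-1}\Theta(z)\in a\,\EE{W}$ is the dual of the displayed chain above. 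The paper's version has the minor advantage of naming $\widehat{W}$ explicitly, while yours is arguably more transparent given that the lemma is phrased in terms of $\EE{-}$; but the content is identical.
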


\begin{proof}
Let $q_1 \cap \dots \cap q_s$ be a minimal primary decomposition of $J$ with $Q=\sqrt{q_1}$.
Pick an $a\in R$ for which $(J:a)=Q$.
and write $\widehat{W}=(W :_{R^\alpha} a) :=\{ v\in R^\alpha \,|\, a v \in W\}$.
It is straightforward to verify that
$(0 :_R R^\alpha/\widehat{W})=(J : a ) = Q$ and since $U W \subseteq W^{[p]}$, we have
$$a^p U \widehat{W} \subseteq a^{p-1} U W \subseteq a^{p-1} W^{[p]} \subseteq   W^{[p]}$$
and so $U \widehat{W} \subseteq (W^{[p]} :_{R^\alpha} a^p)= (W :_{R^\alpha} a)^{[p]}$. 
\end{proof}

Next, we want to introduce the following terminology.

\begin{defi}
Let $\Theta = U^t T : E^\alpha \rightarrow E^\alpha$ (where $U$ is a $\alpha \times \alpha$ matrix with entries in $R$) be a Frobenius map.
We shall call an ideal \emph{$\Theta$-special} (or just \emph{special} if $\Theta$ is understood) if it is an annihilator
of an $R[\Theta; f]$-submodule of $E^\alpha$.
Equivalently, an ideal is $\Theta$-special if it is the annihilator of $R^\alpha/W$ where $UW \subseteq W^{[p]}$.

A $\Theta$-special prime ideal shall be referred to as being \emph{$\Theta$-special prime}.
\end{defi}

A basic fact concerning special primes is the following.

\begin{lem}
Let $P\subset R$ be a special prime and $V=\left( P R^\alpha \right)^{\star U} \subseteq R^\alpha$.
Then $\EE{V}$ is the largest $R[\Theta; f]$-module
whose annihilator is $P$.
\end{lem}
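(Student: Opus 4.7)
The plan is to verify in turn that $\EE{V}$ is an $R[\Theta;f]$-submodule, that $\Ann_R\EE{V}=P$, and that it contains every other $R[\Theta;f]$-submodule of $E^\alpha$ with annihilator $P$. All three facts will follow from combining the universal property of the $\star U$-closure (Theorem \ref{qth root with respect to U}(b)) with the Matlis-dual dictionary $\Ann_R\EE{W}=\Ann_R(R^\alpha/W)$ already used throughout the section.

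First I would note that by the construction in Theorem \ref{qth root with respect to U}(b), $V=(PR^\alpha)^{\star U}$ satisfies $UV\subseteq V^{[p]}$, so $\EE{V}$ is automatically an $R[\Theta;f]$-submodule of $E^\alpha$. Since $V\supseteq PR^\alpha$, the quotient $R^\alpha/V$ is killed by $P$, giving the easy containment $P\subseteq \Ann_R(R^\alpha/V)=\Ann_R\EE{V}$.

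To obtain the reverse containment I would use the hypothesis that $P$ is $\Theta$-special: pick any $W\subseteq R^\alpha$ with $UW\subseteq W^{[p]}$ and $\Ann_R(R^\alpha/W)=P$. Then $PR^\alpha\subseteq W$, so $W$ lies in the collection of submodules over which $\star U$ takes its minimum; Theorem \ref{qth root with respect to U}(b) forces $V\subseteq W$. The induced surjection $R^\alpha/V\twoheadrightarrow R^\alpha/W$ yields $\Ann_R(R^\alpha/V)\subseteq\Ann_R(R^\alpha/W)=P$, and combined with the previous step this gives $\Ann_R\EE{V}=P$.

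For maximality, let $N\subseteq E^\alpha$ be any $R[\Theta;f]$-submodule with $\Ann_R N=P$, and write $N=\EE{W'}$ for some $W'\subseteq R^\alpha$ with $UW'\subseteq W'^{[p]}$. Since $\Ann_R(R^\alpha/W')=P$, we have $PR^\alpha\subseteq W'$, so $W'$ is one of the submodules whose minimum defines $V$. Another application of the minimality in Theorem \ref{qth root with respect to U}(b) gives $V\subseteq W'$, whence $N=\EE{W'}\subseteq\EE{V}$, as required. No step presents a genuine obstacle here; the entire argument is a direct application of the universal property of the $\star U$-closure, so the only thing to be careful about is the order-reversing nature of the dual correspondence $W\mapsto\EE{W}$.
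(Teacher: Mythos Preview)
Your proof is correct and follows essentially the same approach as the paper's: both use the minimality of $(PR^\alpha)^{\star U}$ to show $V\subseteq W$ whenever $\EE{W}$ is an $R[\Theta;f]$-submodule annihilated by $P$, and then read off both maximality and the exact annihilator from this. The only difference is cosmetic: the paper runs the argument once and extracts both conclusions simultaneously, whereas you apply it twice, once for a witness $W$ (to pin down the annihilator) and once for an arbitrary $W'$ (for maximality).
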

\begin{proof}
The construction of the $\star$-closure guarantees that $\EE{ V}$ is an $R[\Theta; f]$-module, and this is clearly annihilated by $P$.
If $\EE{W}$ is another $R[\Theta; f]$-module annihilated by $P$ then $P R^\alpha \subseteq W$ and
$\left( P R^\alpha \right)^{\star U} \subseteq  W^{\star U}= W$  and hence $\EE{ V} \supseteq \EE{W}$ and the annihilator of
both is $P$.
\end{proof}


\section{The case $\alpha=1$}
\label{Section: the case alpha=1}

In this section we describe an algorithm for finding all submodules of $\EE{ P} \subset E$ which are preserved by a given Frobenius map $\Theta=u T$ ($u\in R$),
under the assumptions that $P\subset R$ is prime and
that the restriction of $\Theta : E \rightarrow E$ to  $\EE{P}$ is not the zero map. This algorithm is essentially the one described in \cite{Katzman-Schwede}, however,
we present it here in terms
of $R[\Theta; f]$-submodules of $E$ rather than in terms of Frobenius splittings and we do so in more algebraic language.

Fix $u\in R$ and $\Theta=uT$ throughout the rest of this section.

\begin{thm}[cf.~section 4 in \cite{Katzman-Schwede}]\label{Theorem: the Utah Proof}
Let $P\subset Q$ be prime $\Theta$-special ideals, write $S=R/P$.
Let $J\subseteq R$ be an ideal whose image in $S$ defines its singular locus.
\begin{enumerate}
  \item[(a)] If $(P^{[p]}:P) Q \subseteq Q^{[p]}$ then $J\subseteq Q$.
  \item[(b)] If $(P^{[p]}:P) Q \nsubseteq Q^{[p]}$ then $( uR + P^{[p]} : (P^{[p]}:P))  \subseteq Q$.
  \item[(c)] Assume further that $R$ is $F$-finite. If the restriction $\Theta$ to $\EE{P}$ is not the zero map,
  then $( uR + P^{[p]} : (P^{[p]}:P)) \supsetneq P$.
\end{enumerate}
\end{thm}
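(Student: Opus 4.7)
The plan is to treat the three parts in turn, using throughout two facts: (i) $P$ being $\Theta$-special means $uP\subseteq P^{[p]}$, so $u\in(P^{[p]}:P)$, and likewise $u\in(Q^{[p]}:Q)$; and (ii) by Kunz's theorem, Frobenius is flat on the regular ring $R$, from which one shows that $\mathfrak{p}^{[p]}$ is $\mathfrak{p}$-primary for any prime $\mathfrak{p}$ (if $r\notin\mathfrak{p}$, flat base change through Frobenius makes $r^p$ a non-zerodivisor on $R/\mathfrak{p}^{[p]}$, ruling out associated primes outside $\mathfrak{p}$).

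For part (a), I would prove the contrapositive. If $J\not\subseteq Q$, then $R_Q/PR_Q\cong S_{Q/P}$ is regular, so $PR_Q$ is generated by part of a regular system of parameters $x_1,\ldots,x_h$ of $R_Q$, which I extend to $x_1,\ldots,x_h,y_1,\ldots,y_k$ with $k\geq 1$ (since $P\subsetneq Q$). A standard computation in a regular local ring gives
$$\left((PR_Q)^{[p]}:PR_Q\right) \,=\, (PR_Q)^{[p]}+(x_1\cdots x_h)^{p-1}R_Q,$$
so $(x_1\cdots x_h)^{p-1}y_1$ lies in $((PR_Q)^{[p]}:PR_Q)\cdot QR_Q$. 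Since every exponent in the monomial $x_1^{p-1}\cdots x_h^{p-1}y_1$ is strictly less than $p$, the monomial-basis description of $R_Q/(QR_Q)^{[p]}$ (obtained by passing to $\widehat{R_Q}$ and applying Cohen's structure theorem) shows this element is not in $(QR_Q)^{[p]}$. Since localization preserves containments, this contradicts the hypothesis $(P^{[p]}:P)Q\subseteq Q^{[p]}$.

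For part (b), take any $c\in(uR+P^{[p]}:(P^{[p]}:P))$ and, using the hypothesis, choose $v\in(P^{[p]}:P)$ and $q\in Q$ with $vq\notin Q^{[p]}$. Writing $cv=ur+w$ with $r\in R$ and $w\in P^{[p]}$ yields
$$cvq \,=\, urq+wq \,\in\, uQ+P^{[p]}Q \,\subseteq\, Q^{[p]},$$
using $u\in(Q^{[p]}:Q)$ and $P^{[p]}\subseteq Q^{[p]}$. Since $Q^{[p]}$ is $Q$-primary and $vq\notin Q^{[p]}$, the definition of primary forces $c\in\sqrt{Q^{[p]}}=Q$.

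For part (c), the plan is to localize at $P$. Set $M=(P^{[p]}:P)/P^{[p]}$, a finitely generated $R$-module. The same regular-local computation applied to $R_P$, with maximal ideal $PR_P=(x_1,\ldots,x_h)$ generated by a regular system of parameters, shows that $M_P$ is a one-dimensional $\kappa(P)$-vector space generated by the class of $(x_1\cdots x_h)^{p-1}$. The hypothesis $\Theta|_{\EE{P}}\ne 0$ translates via Lemma \ref{Lemma: Zero action submodule} (for $e=1$) into $u\notin P^{[p]}$; combined with the $P$-primarity of $P^{[p]}$, this lifts to $u\notin P^{[p]}R_P$, so $\bar u_P\in M_P$ is nonzero and hence generates $M_P$. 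Therefore $(M/R\bar u)_P=0$, and since $M/R\bar u$ is a finitely generated $R$-module, some $c\in R\setminus P$ annihilates it, giving $c\cdot(P^{[p]}:P)\subseteq uR+P^{[p]}$. Combined with the trivial inclusion $P\subseteq(uR+P^{[p]}:(P^{[p]}:P))$ (from $P\cdot(P^{[p]}:P)\subseteq P^{[p]}$), this yields the claimed strict containment. The $F$-finite hypothesis enters most naturally through the Fedder identification $M\cong\Hom_S(F_*S,S)$ of $F_*S$-modules, which is the formulation in \cite{Katzman-Schwede} and gives the same conclusion via a rank-one torsion-freeness argument over $F_*S$. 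I expect the main obstacle to be the precise description of $M_P$ together with the lifting $u\notin P^{[p]}\Rightarrow u\notin P^{[p]}R_P$, both of which rely on the primarity of $P^{[p]}$ supplied by Kunz's theorem.
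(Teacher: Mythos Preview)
Your proposal is correct. Part (b) is essentially identical to the paper's argument, but parts (a) and (c) take genuinely different routes.

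For (a), the paper argues conceptually: the hypothesis $(P^{[p]}:P)Q\subseteq Q^{[p]}$ implies $(P^{[p]}:P)\subseteq (Q^{[p]}:Q)$, which after localizing at $Q$ says that $\Ann_{E_{R_Q}}QR_Q$ is stable under \emph{every} Frobenius map on $E_{S_Q}(S_Q/QS_Q)$; if $J\not\subseteq Q$ then $S_Q$ is regular, and $E_{S_Q}$ is simple under its natural Frobenius, a contradiction. Your approach instead performs an explicit monomial computation after localizing at $Q$: you write $PR_Q$ as $(x_1,\dots,x_h)$, identify $((PR_Q)^{[p]}:PR_Q)=(PR_Q)^{[p]}+(x_1\cdots x_h)^{p-1}R_Q$, and exhibit $(x_1\cdots x_h)^{p-1}y_1$ as an element violating the localized hypothesis. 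This is more hands-on but entirely self-contained, avoiding the simplicity statement for $E$ under Frobenius.

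For (c), the paper invokes Fedder's identification $(P^{[p]}:P)/P^{[p]}\cong\Hom_S(F_*S,S)$ and uses that this is a rank-one $F_*S$-module in the $F$-finite case. Your argument localizes directly at $P$: the same regular-parameter computation shows $M_P$ is a one-dimensional $\kappa(P)$-space, and $P$-primarity of $P^{[p]}$ (via Kunz) lifts $u\notin P^{[p]}$ to $\bar u_P\neq 0$, so $(M/R\bar u)_P=0$ and finite generation produces the required $c\notin P$. This is not only a different path but a slight strengthening: your argument for (c) nowhere uses $F$-finiteness, whereas the paper's does. The trade-off is that the paper's formulation connects more transparently to the Frobenius near-splitting picture developed in Section~\ref{Section: Connections with Frobenius near splittings}.
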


\begin{proof}
Write $E_{R_Q}=E_{R_Q}(R_Q/QR_Q)$.
Note that $R_Q$ is regular; let $\widetilde{T}$ be the natural Frobenius on $E_{R_Q}$.

Write $\widetilde{E}=E_{S_Q}(S_Q/QS_Q)=\Ann_{E_{R_Q}} P R_Q$ and note that
the Frobenius maps on $\widetilde{E}$ are  given by $(PR_Q^{[p]}:PR_Q) \widetilde{T}$
and that the Frobenius maps on $\Ann_{E_{R_Q}} Q R_Q \subset \Ann_{E_{R_Q}} P R_Q=\widetilde{E}$
are  given by $(QR_Q^{[p]}:QR_Q) \widetilde{T}$ (cf.~Proposition 4.1 in \cite{Katzman1}).

If (a), then $(PR_Q^{[p]}:PR_Q) \subseteq (QR_Q^{[p]}: QR_Q)$, i.e.,
 $\Ann_{E_{R_Q}} Q R_Q$ is an $S[\theta]$-submodule of $\widetilde{E}$ for all Frobenius maps $\theta$ on $\widetilde{E}$.
Now if $J\nsubseteq Q$, $S_Q$ is regular and $\widetilde{E}$ is a simple $S[\tau]$-module  where
$\tau:  \widetilde{E} \rightarrow\widetilde{E}$ is the natural Frobenius map,
hence $Q=P$ or $Q=R$, a contradiction.

If (b), pick any $c\in ( uR  + P^{[p]} : (P^{[p]}:P))$.
We have
$$c (P^{[p]}:P) Q \subseteq (uR  + P^{[p]}) Q \subseteq Q^{[p]}$$
and since  $(P^{[p]}:P) Q \nsubseteq Q^{[p]}$ we conclude
$c$ is a zero-divisor on $R/Q^{[p]}$ and $c\in Q$.

To prove (c) we follow \cite{Fedder} and identify the $S$-module $(P^{[p]} : P)/ P^{[p]}$ with $\Hom_{S} (F_* S, S)$
and $u$ with a non-zero $\psi\in \Hom_{S} (F_* S, S)$. We define $C$ to be the $S$-submodule of $\Hom_{S} (F_* S, S)$ generated
by $\psi$. Now $\Hom_{S} (F_* S, S)$ is a rank-one $F_* S$-module (cf.~\cite[Lemma 1.6]{Fedder}) and hence there exists a non-zero $c\in S$
which multiplies $\Hom_{S} (F_* S, S)$ into $C$, and hence $c$ multiplies $(P^{[p]} : P)$ into $uR + P^{[p]}$.
\end{proof}

To turn this theorem into an algorithm, one would start with a given special prime $P$ and find all
special primes $Q\supsetneq P$ for which there is no special prime strictly between $P$ and $Q$.
We shall henceforth refer to such special prime $Q$ as \emph{minimally containing $P$}.

\begin{cor}\label{Corollary: finitely many minimally minimally containing primes}
\begin{enumerate}
  \item [(a)] Any prime containing $I_1(uR)$ is a special prime.
  \item [(b)] Let $P$ be a special prime which does not contain $I_1(uR)$. The set of special primes minimally containing
$P$ is finite.
  \item [(c)] Let $P$ be a special prime such that $uT$ is not nilpotent on $\EE{P}$. The set of special primes minimally containing
$P$ is finite.
\end{enumerate}
\end{cor}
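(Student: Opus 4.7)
The plan is to prove (a) directly, obtain (b) by combining Theorem~\ref{Theorem: the Utah Proof} with the star-closure construction and Lemma~\ref{Lemma: associated of special is special}, and deduce (c) from (b).

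For (a), if $Q\supseteq I_1(uR)$ then by definition $u\in I_1(uR)^{[p]}\subseteq Q^{[p]}$, hence $uQ\subseteq Q^{[p]}$. For any $z\in\EE{Q}$, $T(z)\in\EE{Q^{[p]}}$ is annihilated by $Q^{[p]}$, so $Q\cdot\Theta(z)=uQ\cdot T(z)\subseteq Q^{[p]}T(z)=0$, giving $\Theta(z)\in\EE{Q}$; since $\Ann_R\EE{Q}=Q$, the prime $Q$ is special.

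For (b), the hypothesis is equivalent to $u\notin P^{[p]}$. The crux is to establish $K:=(uR+P^{[p]}:(P^{[p]}:P))\supsetneq P$. The inclusion $K\supseteq P$ is immediate from $P\cdot(P^{[p]}:P)\subseteq P^{[p]}$, and for strict inclusion I localize at $P$: in the regular local ring $R_P$ with maximal ideal $\mathfrak{n}$, a standard computation gives $(\mathfrak{n}^{[p]}:\mathfrak{n})=\mathfrak{n}^{[p]}+R_Pw$ where $w$ is the product of the $(p-1)$st powers of a regular system of parameters, so $(\mathfrak{n}^{[p]}:\mathfrak{n})/\mathfrak{n}^{[p]}$ is a one-dimensional $R_P/\mathfrak{n}$-vector space. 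Since $P^{[p]}$ is $P$-primary, $u\notin P^{[p]}$ forces $u/1\notin\mathfrak{n}^{[p]}$; as $u/1\in(\mathfrak{n}^{[p]}:\mathfrak{n})$, it generates this cyclic quotient, so $uR_P+\mathfrak{n}^{[p]}=(\mathfrak{n}^{[p]}:\mathfrak{n})$ and $K_P=R_P$. This rules out $K\subseteq P$, replacing the $F$-finite argument of Theorem~\ref{Theorem: the Utah Proof}(c); I expect this step to be the main obstacle, since the corollary carries no $F$-finiteness assumption.

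Combining $K\supsetneq P$ with $J\nsubseteq P$ (as $R/P$ is a domain, regular at its generic point) and using that $R/P$ is a domain, the ideal $I:=P+JK$ strictly contains $P$, and Theorem~\ref{Theorem: the Utah Proof} forces $I\subseteq Q$ for every special prime $Q\supsetneq P$. Passing to the star-closure $\widetilde I:=I^{\star u}$, $\widetilde I$ is by construction a special ideal containing $I$, and every special $Q\supseteq I$ is $\star u$-closed, so by minimality of the star-closure $Q\supseteq\widetilde I$. By Lemma~\ref{Lemma: associated of special is special} every associated prime of $\widetilde I$ is special, so each of its finitely many minimal primes is a special prime strictly containing $P$. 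For $Q\in\mathcal{M}$, $Q$ contains some minimal prime $\widetilde Q$ of $\widetilde I$; since $\widetilde Q$ is itself a special prime strictly containing $P$, the minimality of $Q$ forces $\widetilde Q=Q$, placing $\mathcal{M}$ inside the finite set of minimal primes of $\widetilde I$.

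For (c), non-nilpotence of $uT$ on $\EE{P}$ implies $uT\not\equiv 0$ there; but if $u\in P^{[p]}$ then $T(z)\in\EE{P^{[p]}}$ is killed by $P^{[p]}$ for every $z\in\EE{P}$, so $uT\equiv 0$. Hence $u\notin P^{[p]}$, i.e., $I_1(uR)\nsubseteq P$, and (b) applies.
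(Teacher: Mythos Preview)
Your proof is correct, and parts (a) and (c) match the paper's argument. For (b), however, you take a genuinely different route. The paper's proof of (b) splits into two cases according to whether the minimally containing prime $Q$ itself contains $I_1(uR)$: if it does, then $Q\supseteq I_1(uR)+P\supsetneq P$ and, since every prime containing $I_1(uR)$ is special by (a), minimality forces $Q$ to be a minimal prime of $I_1(uR)+P$; if it does not, the paper simply invokes the global finiteness of special primes on which $uT$ acts nonzero (citing Sharp and Enescu--Hochster). Your argument instead stays internal to the paper: you use parts (a) and (b) of Theorem~\ref{Theorem: the Utah Proof} to trap every such $Q$ above the explicit ideal $P+JK$, then pass to its $\star u$-closure and apply Lemma~\ref{Lemma: associated of special is special}. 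The payoff is twofold. First, your proof is self-contained and does not lean on the external finiteness theorem. Second, your localization argument at $P$---computing $(\mathfrak n^{[p]}:\mathfrak n)/\mathfrak n^{[p]}$ as a one-dimensional residue-field vector space in the regular local ring $R_P$---establishes $K\supsetneq P$ without any $F$-finiteness hypothesis, genuinely strengthening Theorem~\ref{Theorem: the Utah Proof}(c). This is a nice observation: the rank-one property of $\Hom_S(F_*S,S)$ used in the paper's proof of \ref{Theorem: the Utah Proof}(c) becomes, after localizing at $P$, the elementary socle computation you carry out.
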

\begin{proof}

The first statement follows from Lemma \ref{Lemma: Zero action submodule} with $e=1$, $\Theta=uT$: if $P\supseteq I_1(uR)$, the restriction of $uT$ to $\EE{P}$ is zero.

For $P$ as in (b), any special prime ideal $Q$ minimally containing $P$ is either among the finitely many
special primes not containing $I_1(uR)$ or a special prime which contains $I_1(uR)+P\supsetneq P$,
and in the latter case it is among the minimal primes of $I_1(uR)+P$.

For (c) note that if $uT$ is not nilpotent on $\EE{P}$, the restriction of $uT$ to $\EE{P}$ is not zero,
and Lemma \ref{Lemma: Zero action submodule}
shows that $P$ does not contain $I_1(uR)$
\end{proof}

A by-product of Theorem \ref{Theorem: the Utah Proof} and Corollary \ref{Corollary: finitely many minimally minimally containing primes}
is the algorithm described in
\cite[section 3]{Katzman-Schwede} which produces in the $F$-finite case
all $\Theta$-special primes $P$ for which the restriction of $\Theta$ to $\EE{P}$ is not the zero map.
As stated in the introduction, the aim of this paper is to extend this algorithm
and produce
the prime annihilators of submodules of $E^\alpha$ preserved by a given Frobenius map which restricts to a non-zero map,
and we shall do so in the subsequent sections.
It might be instructive at this point to see why Theorem \ref{Theorem: the Utah Proof} is not useful when $\alpha>1$: while parts (a) and (b) of the Theorem
hold in this extended generality, part (c) of the Theorem fails. The problem with (c) is that the module $\Hom_{S} (F_* S^\alpha, S^\alpha)$ is usually not
cyclic when $\alpha>1$.

\section{The case $\alpha>1$}
\label{Section: the case alpha>1}

The main aim of this section is to extend Corollary \ref{Corollary: finitely many minimally minimally containing primes}
to the case $\alpha>1$ and to obtain as a byproduct an algorithm for finding all special primes $P$ with the property that
for some $R[\Theta; f]$-submodule $M\subseteq E^\alpha$ with $(0:_R M)=P$,
the restriction of $\Theta$ to $M$ is not nilpotent.

\begin{thm}
\label{Theorem: the main theorem}
The set of all special primes $P$ with the property that
for some $R[\Theta; f]$-submodule $M\subseteq E^\alpha$ with $(0:_R M)=P$,
the restriction of $\Theta$ to $M$ is not zero, is finite.
\end{thm}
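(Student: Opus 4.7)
The plan is to induct on $\alpha$, using the $\alpha = 1$ Katzman--Schwede case (Corollary \ref{Corollary: finitely many minimally minimally containing primes}) as the base and extending the Section \ref{Section: the case alpha=1} machinery to the matrix setting. As a preliminary reformulation, note that by the final lemma of Section \ref{Extending the star-closure} the largest $R[\Theta; f]$-submodule of $E^\alpha$ with annihilator $P$ is $M_P := \EE{(PR^\alpha)^{\star U}}$; combined with Lemma \ref{Lemma: Zero action submodule} (case $e = 1$) applied to $M_P$, this shows that a special prime $P$ belongs to the set $\mathcal{P}$ of the theorem if and only if the star closure $(PR^\alpha)^{\star U}$ fails to contain the fixed submodule $I_1(\Image U) \subseteq R^\alpha$. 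So $\mathcal{P}$ is cut out by a concrete $\star$-closure condition.

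For the tree-branching part, I would extend Theorem \ref{Theorem: the Utah Proof} and Corollary \ref{Corollary: finitely many minimally minimally containing primes} to $\alpha > 1$: for each $P \in \mathcal{P}$, the set of special primes in $\mathcal{P}$ that minimally contain $P$ should be finite. Given such $P$ and a candidate $Q \supsetneq P$, passing to $R_Q$ yields a compatible submodule of $E_{R_Q}^\alpha$; mimicking the Utah dichotomy, either $Q$ contains the singular locus of $R/P$, or $Q$ contains an explicit ideal built from the matrix $U$ and $(PR^\alpha)^{\star U}$---using Lemma \ref{Lemma: Ie and localization} to control how the star-closure localizes. Both cases yield only finitely many minimal primes.

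For the roots of the tree---the minimal elements of $\mathcal{P}$---I would localize at such a $P$. Because $\mathcal{S}^{-1} M_P$ is annihilated by $P R_P$, it sits inside the socle $\kappa(P)^\alpha$ of $E_{R_P}^\alpha$, a finite-dimensional $\kappa(P)$-vector space on which $U^t T$ acts $p$-semilinearly and non-trivially (the non-triviality being exactly the condition $P \in \mathcal{P}$, by the first paragraph and Lemma \ref{Lemma: Ie and localization}). From this finite-dimensional data I would either extract a proper $\Theta$-invariant subspace, reducing to an $\alpha' < \alpha$ problem and invoking the inductive hypothesis, or observe that non-triviality of the full-rank action pins $P$ down among the minimal primes of an explicit determinantal-type ideal extracted from $U$. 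Combined with the finite branching from the previous step and with Krull-dimension-bounded depth, this gives finiteness of $\mathcal{P}$.

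The main obstacle I anticipate is this last step: as noted immediately after Corollary \ref{Corollary: finitely many minimally minimally containing primes}, the Fedder-cyclicity argument in Theorem \ref{Theorem: the Utah Proof}(c) fails for $\alpha > 1$ because $\Hom_S(F_* S^\alpha, S^\alpha)$ is no longer a rank-one $F_* S$-module. The induction on $\alpha$ must therefore genuinely exploit the matrix structure of $U$ via the $p$-semilinear data on $\kappa(P)^\alpha$, rather than through a Fedder-type ideal argument; this is where I expect the proof to require real work.
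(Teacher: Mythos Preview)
Your inductive skeleton (induct on $\alpha$, base case from Corollary \ref{Corollary: finitely many minimally minimally containing primes}) and your reformulation of $\mathcal{P}$ via the condition $I_1(\Image U)\not\subseteq (PR^\alpha)^{\star U}$ are both correct and match the paper. The gap is in the inductive step itself.

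Your plan for the branching step is to \emph{extend} the Utah dichotomy of Theorem \ref{Theorem: the Utah Proof} to $\alpha>1$: either $Q$ contains the singular locus of $R/P$, or $Q$ contains ``an explicit ideal built from $U$ and $(PR^\alpha)^{\star U}$.'' But you never say what this ideal is, and as you yourself note, the Fedder argument giving part (c) fails for $\alpha>1$. Without a replacement for (c), the ideal in (b) may collapse to $P$ and the dichotomy gives no progress. The paper does \emph{not} attempt to rescue this: for $\alpha>1$ the singular-locus branch is abandoned entirely, and the argument proceeds along a different axis.

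What the paper actually does at a special prime $P$ is the following case analysis. First, if $(PR^\alpha)^{\star U}\supsetneq PR^\alpha$, then some entry $a$ of a generating matrix lies outside $P$; either $a\in Q$ (and Lemma \ref{Lemma: bootstrap with one element} gives $Q$ as a minimal prime of an explicit annihilator) or $a\notin Q$, in which case an explicit change of basis over $R_a$ (Lemma \ref{Lemma: reduce alpha}) puts an elementary vector into the image and drops the problem to rank $\alpha-1$. Second, if $(PR^\alpha)^{\star U}=PR^\alpha$, then all entries of $U$ lie in $(P^{[p]}:P)$; since $(P^{[p]}:P)/P^{[p]}$ has rank one over $R/P$, after inverting some $a_1\notin P$ one can write $a_1^\mu U\equiv gV\pmod{P^{[p]}}$ with $g\in(P^{[p]}:P)$. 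This is the correct matrix analogue of the Fedder input---it is applied entrywise, not to $\Hom_S(F_*S^\alpha,S^\alpha)$. Now a trichotomy on $d=\det V$: if $d\notin Q$ the problem reduces to the $\alpha=1$ Frobenius map $gT$; if $d\in Q\setminus P$ then $Q$ is a minimal prime of an explicit $\star$-closure; and if $d\in P$ a further change of basis makes the last column of $U$ vanish, which is handled by a separate and rather intricate argument (Proposition \ref{Proposition: Zero column}) that again reduces to $\alpha-1$.

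Your ``socle over $\kappa(P)$'' heuristic gestures toward the determinantal branch, but the actual reduction mechanism is the concrete change-of-basis Lemma \ref{Lemma: reduce alpha} over a localization $R_a$, not a $\kappa(P)$-linear argument; and the zero-column case (Proposition \ref{Proposition: Zero column}), which carries much of the technical weight, has no counterpart in your sketch.
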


We will prove this theorem by induction on $\alpha$; the case $\alpha=1$ being the content of Corollary \ref{Corollary: finitely many minimally minimally containing primes}.
We shall assume henceforth in this section that $\alpha>1$ and that the theorem holds for $\alpha-1$ and that, additionally, as in the case $\alpha=1$,
there is an effective way of finding the finitely many special primes in question.

We should note that, given a non-zero Frobenius action
${U^\prime}^t T : \EE{ {W^\prime}} \rightarrow \EE{ {W^\prime}}$
with special prime $Q=(0 : R^{\alpha-1}/W^\prime)$,
the induction hypothesis gives us an effective method for finding this $Q$:
for any other ${U^\prime}^t T$-special prime $P\subset Q$,
$\left( P R^{\alpha-1} \right)^{\star {U^\prime}^t T} \subset \left( Q R^{\alpha-1} \right)^{\star {U^\prime}^t T} \subseteq W^\prime$
and hence the restriction of
${U^\prime}^t T$ to
$$\EE{ {\left( P R^{\alpha-1} \right)^{\star {U^\prime}}} }\supset
\EE{ {\left( Q R^{\alpha-1} \right)^{\star {U^\prime}}} } $$
is not nilpotent. Now we can enumerate all these special primes $P$
starting with $P=0$ and ascending recursively to bigger special primes until all such special primes are listed.

For the rest of this section, we will fix a Frobenius map $\Theta = U^t T : E^\alpha \rightarrow E^\alpha$ (where $U$ is a $\alpha \times \alpha$ matrix with entries in $R$) and we wish to find all the special primes with respect to $\Theta$.

The following lemma is our starting point of finding special primes $Q\supseteq P$ when a special prime $P$ is given.
\begin{lem}\label{Lemma: bootstrap with one element}
Let $Q$ be a special prime minimally containing the special prime $P$.
Let $a\in Q\setminus P$ and write $V={\left( (P+aR) R^{\alpha} \right)^{\star {U}}}$
then $Q$ is among the minimal primes of
$R^\alpha / V$.
\end{lem}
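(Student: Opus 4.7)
Let $J=(0:_R R^\alpha/V)=(0:_R \EE{V})$. The plan is to prove the two containments $P+aR\subseteq J\subseteq Q$, and then to show that any minimal prime of $J$ that happens to sit inside $Q$ must in fact equal $Q$. The main leverage comes from the preceding lemma identifying $\EE{(QR^\alpha)^{\star U}}$ as the largest $R[\Theta;f]$-module with annihilator $Q$, together with Lemma~\ref{Lemma: associated of special is special} saying that associated primes of annihilators of $R[\Theta;f]$-submodules are themselves special.

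First I would check the easy containment $P+aR\subseteq J$: by construction $(P+aR)R^\alpha\subseteq V$, so every element of $P+aR$ kills $R^\alpha/V$. Next, for $J\subseteq Q$, I would argue by monotonicity of the $\star U$-closure. Since $P+aR\subseteq Q$, we have $(P+aR)R^\alpha\subseteq QR^\alpha$, and applying the construction in Theorem~\ref{qth root with respect to U}(b) to both sides gives $V\subseteq (QR^\alpha)^{\star U}$. Dualising via $\EE{-}$ reverses inclusions, so $\EE{V}\supseteq \EE{(QR^\alpha)^{\star U}}$, and hence $J=(0:_R \EE{V})\subseteq (0:_R \EE{(QR^\alpha)^{\star U}})=Q$, where the last equality is the lemma just above. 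So $Q\supseteq J\supseteq P+aR\supsetneq P$.

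Now to show $Q$ is minimal over $J$, pick any prime $P'$ with $J\subseteq P'\subseteq Q$ that is minimal over $J$; such a $P'$ exists precisely because $J\subseteq Q$. Since $P'$ is minimal over $J$, it is an associated prime of $J$. Applying Lemma~\ref{Lemma: associated of special is special} to the $R[\Theta;f]$-submodule $\EE{V}\subseteq E^\alpha$ (whose annihilator is exactly $J$) produces an $R[\Theta;f]$-submodule with annihilator $P'$, so $P'$ is a $\Theta$-special prime. From $J\supseteq P+aR$ we get $P'\supseteq P+aR$, and since $a\in Q\setminus P$ we have $P'\supsetneq P$. Thus $P'$ is a special prime strictly between $P$ and $Q$ (inclusive of $Q$), and by the hypothesis that $Q$ minimally contains $P$ we are forced to conclude $P'=Q$. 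This shows $Q$ is itself a minimal prime of $J$.

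The only nontrivial step is the $J\subseteq Q$ inclusion, and this is really just the monotonicity of the $\star$-operation plus the maximality property from the preceding lemma; everything else is bookkeeping. I do not expect any serious obstacle, since the machinery of $\star$-closures and Matlis duality for $R[\Theta;f]$-modules developed in Sections~\ref{Section: Frobenius maps of $E^n$ and their stable submodules} and \ref{Extending the star-closure} is tailored for exactly this kind of argument.
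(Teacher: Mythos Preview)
Your proof is correct and follows essentially the same route as the paper's: sandwich $J=(0:_R R^\alpha/V)$ between $P$ and $Q$ using monotonicity of the $\star U$-closure, pick a minimal prime of $J$ inside $Q$, invoke Lemma~\ref{Lemma: associated of special is special} to see it is special, and conclude it equals $Q$ by the minimality hypothesis. The only difference is cosmetic: you make the strict containment $J\supsetneq P$ explicit via $a\in J\setminus P$, whereas the paper leaves this implicit.
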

\begin{proof}
We have
$${\left( P R^{\alpha} \right)^{\star {U}}} \subseteq V \subseteq {\left( Q R^{\alpha} \right)^{\star {U}}}$$
and so
$$\EE{ {\left( Q R^{\alpha} \right)^{\star {U}}} } \subseteq
\EE{ V}  \subseteq \EE{ {\left( P R^{\alpha} \right)^{\star {U}}} }$$
and looking at the annihilators of these we get
$P \subseteq ( 0 :_R R^\alpha / V) \subseteq Q$
and we deduce that $Q$ contains a minimal prime of $R^\alpha / V$. This minimal prime is also special by Lemma \ref{Lemma: associated of special is special}
and since $Q$ minimally contains $P$, this minimal prime must equal $Q$.
\end{proof}

Next, we want to treat a (crucial) special case: the $\alpha$-th column of $U$ is entirely zero. To this end, we need the following lemma, which will enable us to reduce the rank of $U$ by one when we handle the aforementioned special case.

\begin{lem}\label{Lemma: reduce alpha}
Assume $\alpha>1$.
Let $Q$ be a special prime, and let $W\subseteq R^\alpha$ be such that $U W \subseteq W^{[p]}$ and $(0 :_R R^\alpha/ W)=Q$.
Let $a\notin Q$ and let $X$ be an invertible $\alpha \times \alpha$  matrix with entries in the localization $R_a$.
Let $\nu \gg 0$ be such that $U^\prime= a^\nu X^{[p]} U X^{-1}$ has entries in $R$ and let $W^\prime=X W_a \cap R^\alpha$.
Write $\Theta^\prime={U^\prime}^t T$.
Then
\begin{enumerate}
  \item [(a)]
  $Q$ is a minimal prime of $(0 :_R R^\alpha /W^\prime)$,
  \item [(b)]
  $U^\prime W^\prime\subseteq {W^\prime}^{[p]}$ and hence $Q$ is ${U^\prime}^t T$-special, and
  \item [(c)]
  if the restriction of $\Theta^e$ to $\EE{ W}$ is not zero,
  nor is the restriction ${\Theta^\prime}^e$ to $\EE{ {W^\prime}}$,
\end{enumerate}
\end{lem}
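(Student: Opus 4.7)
I will handle the three parts in order, with localization at $a$ as the main technical tool since $a \notin Q$.

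For (a), the key observation is that $W'_a = XW_a$ as $R_a$-submodules of $R_a^\alpha$: the intersection in the definition of $W'$ is trivialized after inverting $a$, since $XW_a$ already sits inside $R_a^\alpha$. Since $X$ is an $R_a$-linear automorphism, $R_a^\alpha/XW_a$ has the same annihilator as $R_a^\alpha/W_a$, namely $Q_a$. Hence $(0:_R R^\alpha/W')_a = Q_a$, and since $a \notin Q$ the standard bijection between primes of $R$ avoiding $a$ and primes of $R_a$ identifies $Q$ as a minimal prime of $(0:_R R^\alpha/W')$.

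For (b), the task is $U'W' \subseteq W'^{[p]}$ inside $R^\alpha$. Localizing, the compatibility $(Xw)^{[p]} = X^{[p]} w^{[p]}$ (valid in characteristic $p$) together with $UW_a \subseteq W_a^{[p]}$ gives $U'W'_a \subseteq a^\nu (W'_a)^{[p]}$ immediately. To descend to $R^\alpha$, take finitely many generators $v_1, \dots, v_t$ of $W'$, write each as $v_j = a^{-k_j} Xw_j$ with $w_j \in W$, expand $Uw_j \in W^{[p]}$ as $\sum_i r_{ij} z_{ij}^{[p]}$ with $z_{ij} \in W$, and clear denominators by writing $a^{m_{ij}} Xz_{ij} = y_{ij} \in W'$. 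Substituting shows $U'v_j$ is an $R$-linear combination of the $y_{ij}^{[p]} \in W'^{[p]}$ multiplied by a power of $a$ of exponent $\nu - k_j - p m_{ij}$; provided $\nu$ dominates this finite collection of exponents (a condition that can be absorbed into "$\nu \gg 0$"), $U' v_j \in W'^{[p]}$ for every $j$. With this, $\EE{W'}$ is $\Theta'$-stable, and combining (a) with Lemma \ref{Lemma: associated of special is special} produces an $R[\Theta'; f]$-submodule of $E^\alpha$ whose annihilator is exactly $Q$, so $Q$ is $\Theta'$-special.

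For (c), Lemma \ref{Lemma: Zero action submodule} translates the non-vanishing hypotheses into non-containments: $\Theta^e|_{\EE{W}} \neq 0 \iff I_e(\Image \Pi) \not\subseteq W$ with $\Pi = U^{[p^{e-1}]}\cdots U$, and similarly for the primed data with $\Pi' = a^N X^{[p^e]} \Pi X^{-1}$, where $N = \nu(1 + p + \cdots + p^{e-1})$. The core calculation is the identity $I_e(X^{[p^e]} K) = X \cdot I_e(K)$ for any submodule $K \subseteq R_a^\alpha$, which falls out of the universal property of $I_e$ together with $(XL)^{[p^e]} = X^{[p^e]} L^{[p^e]}$. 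Combined with Lemma \ref{Lemma: Ie and localization}, this yields $(I_e(\Image \Pi'))_a = X \cdot (I_e(\Image \Pi))_a$. Using $W'_a = XW_a$, this specialises to $I_e(\Image \Pi')_a \subseteq W'_a \iff I_e(\Image \Pi)_a \subseteq W_a$, and the $a$-purity of $W' = XW_a \cap R^\alpha$ (immediate from the definition) further gives $I_e(\Image \Pi') \subseteq W' \iff I_e(\Image \Pi')_a \subseteq W'_a$. Hence (c) reduces to showing that $I_e(\Image \Pi) \not\subseteq W$ forces $I_e(\Image \Pi)_a \not\subseteq W_a$.

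This last reduction is where I expect the main obstacle to lie. My plan is to show that the image of $I_e(\Image \Pi)$ in $R^\alpha/W$ cannot be entirely $a$-torsion. One ingredient is that $R^\alpha/W$ itself is not $a$-torsion: otherwise some $a^n$ would annihilate it, placing $a \in \sqrt{(0 :_R R^\alpha/W)} = Q$ and contradicting $a \notin Q$. The hard part will be combining this with the $\Theta$-stability of $W$ to exclude $a$-torsion of the $I_e(\Image \Pi)$-portion specifically; this is the delicate technical heart of the proof.
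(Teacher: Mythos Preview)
Your argument for (a) is correct and matches the paper's.

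For (b), your approach is workable but unnecessarily laborious, and it forces $\nu$ larger than merely what makes $U'$ integral. The paper's proof is a one-line direct computation that works for \emph{any} such $\nu$:
\[
U'W' \subseteq \bigl(a^\nu X^{[p]} U X^{-1}\, X W_a\bigr)\cap R^\alpha
\subseteq \bigl(X^{[p]} W_a^{[p]}\bigr)\cap R^\alpha
= (XW_a)^{[p]}\cap R^\alpha
= (XW_a\cap R^\alpha)^{[p]} = W'^{[p]}.
\]
The only nontrivial equality, $(XW_a)^{[p]}\cap R^\alpha=(XW_a\cap R^\alpha)^{[p]}$, holds because $W'$ is $a$-saturated by construction and Frobenius is flat on the regular ring $R$, so $a$ is also a non-zero-divisor on $R^\alpha/W'^{[p]}$. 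You would do well to replace your generator-chasing with this.

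For (c), your reduction through $I_e$ is correct but roundabout; the paper works directly with the equivalent condition $\Pi R^\alpha\not\subseteq W^{[p^e]}$ (where $\Pi=U^{[p^{e-1}]}\cdots U$). Assuming $\Pi' R^\alpha\subseteq W'^{[p^e]}$, one localizes at $a$, uses the telescoping identity $\Pi' = b\,X^{[p^e]}\Pi X^{-1}$ and $W'_a = XW_a$ to obtain $\Pi R_a^\alpha\subseteq W_a^{[p^e]}$, and then concludes $\Pi R^\alpha\subseteq W^{[p^e]}$ by the assertion that $a$ is a non-zero-divisor on $R^\alpha/W^{[p^e]}$. That assertion is exactly the ``delicate technical heart'' you flagged and left open (in your language: the image of $I_e(\Image\Pi)$ in $R^\alpha/W$ has no $a$-torsion). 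Your instinct that this is the crux is right; the paper does not argue it further, and you should be aware that the hypothesis $a\notin Q=\Ann(R^\alpha/W)$ alone does not exclude $a$ from embedded associated primes of $R^\alpha/W$, so this step is not immediate from what is written.
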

\begin{proof}

We have
$$(0 :_R R^\alpha /W^\prime)_a = (0 :_{R_a} R_a^\alpha /X W_a)= (0 :_{R_a} R_a^\alpha /W_a) = (0 :_R R^\alpha /W)_a = Q R_a$$
and (a) follows.


For (b) consider the commutative diagram
\begin{equation*}\label{CD2}
\xymatrix{
R_a^\alpha/W_a \ar@{>}[r]^{U} \ar@{>}[d]^{X}              & R_a^\alpha/ W_a^{[p]} \ar@{>}[d]^{X^{[p]}}   \\
R_a^\alpha/ XW_a \ar@{>}[r]^{X^{[p]} U X^{-1}}            & R_a^\alpha/ X^{[p]} W_a^{[p]} \\
}
\end{equation*}
and compute
$$U^\prime W^\prime = a^\nu X^{[p]} U X^{-1} (X W_a \cap R^\alpha) \subseteq ( a^\nu X^{[p]} U X^{-1} X W_a) \cap R^\alpha
\subseteq $$
$$(X^{[p]} W_a^{[p]}) \cap R^\alpha = (X W_a)^{[p]} \cap R^\alpha= (XW_a \cap R^\alpha)^{[p]}=  {W^\prime}^{[p]}. $$
The second statement in (b) now follows from (a) and Lemma \ref{Lemma: associated of special is special}.

Lemma \ref{Lemma: Zero action submodule} shows that the restriction of $\Theta^e$ to $\EE{ W}$ is not
zero if and only if
$$I_e ( U^{[p^{e-1}]} U^{[p^{e-2}]} \cdots U  R^\alpha) \not\subseteq W, $$
i.e., if and only if
$$U^{[p^{e-1}]} U^{[p^{e-2}]} \cdots U  R^\alpha \not\subseteq W^{[p^e]} .$$
The same Lemma shows that to prove (c) we need to verify that
this implies that
$$I_e( {U^\prime}^{[p^{e-1}]} {U^\prime}^{[p^{e-2}]} \cdots {U^\prime} R^\alpha) \not\subseteq W^\prime, $$
 i.e., that
$${U^\prime}^{[p^{e-1}]} {U^\prime}^{[p^{e-2}]} \cdots {U^\prime} R^\alpha \not\subseteq {W^\prime}^{[p^e]} .$$

Write $b=a^\nu a^{p \nu} \cdots a^{p^{e-1} \nu}$.
We calculate
$${U^\prime}^{[p^{e-1}]} {U^\prime}^{[p^{e-2}]} \cdots {U^\prime}=
b X^{[p^e]} U^{[p^{e-1}]} U^{[p^{e-2}]} \cdots U  X^{-1}$$
and if
$b X^{[p^e]} U^{[p^{e-1}]} U^{[p^{e-2}]} \cdots U  X^{-1} R^\alpha \subseteq {W^\prime}^{[p^e]}$
we may localize at $a$ to obtain
\begin{eqnarray*}
X^{[p^e]} U^{[p^{e-1}]} U^{[p^{e-2}]} \cdots U  R_a^\alpha & = & b X^{[p^e]} U^{[p^{e-1}]} U^{[p^{e-2}]} \cdots U  X^{-1}  R_a^\alpha \\
& \subseteq & {W_a^\prime}^{[p^e]} \\
& = & {(X W_a)}^{[p^e]} \\
& = & X^{[p^e]} W_a^{[p^e]}
\end{eqnarray*}
hence $U^{[p^{e-1}]} U^{[p^{e-2}]} \cdots U R_a^\alpha  \subseteq {W_a}^{[p^e]}$, and since $a$ is not a zero-divisor on $R^\alpha/W^{[p^e]}$ we deduce
$U^{[p^{e-1}]} U^{[p^{e-2}]} \cdots U R^\alpha  \subseteq {W}^{[p^e]}$, contradicting our assumption.

\end{proof}

We are now in position to deal with the following crucial special case.

\begin{prop}\label{Proposition: Zero column}

Assume that the $\alpha$th column of $U$ is zero. Then Theorem \ref{Theorem: the main theorem} holds and
there exists an effective method for finding the special primes $P$ with the property that
for some $R[\Theta; f]$-submodule $M\subseteq E^\alpha$ with $(0:_R M)=P$,
the restriction of $\Theta$ to $M$ is not zero.
\end{prop}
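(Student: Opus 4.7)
The idea is to exploit the block structure forced on $U$ and reduce to the induction hypothesis at level $\alpha-1$. Writing $U=\bigl(\begin{smallmatrix}U'&0\\b^t&0\end{smallmatrix}\bigr)$ with $U'$ an $(\alpha-1)\times(\alpha-1)$ block and $b^t$ a row vector, the vanishing of the last column of $U$ gives two crucial facts: the image of $\Theta=U^tT$ lies in $\iota(E^{\alpha-1})$, where $\iota:E^{\alpha-1}\hookrightarrow E^{\alpha}$ inserts a zero in the last coordinate; and the restriction of $\Theta$ to $\iota(E^{\alpha-1})$ equals $\iota\circ\Theta'$, where $\Theta':=(U')^tT$ is the Frobenius on $E^{\alpha-1}$ defined by $U'$.

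I would then apply the induction hypothesis to $\Theta'$ on $E^{\alpha-1}$, obtaining a finite effectively computable set $\mathcal{P}'$ of $\Theta'$-special primes admitting a witness on which $\Theta'$ is nonzero. For every $P\in\mathcal{P}'$ with witness $M'\subseteq E^{\alpha-1}$, the submodule $\iota(M')\subseteq E^{\alpha}$ is $\Theta$-compatible with annihilator $P$, and $\Theta|_{\iota(M')}=\iota\circ\Theta'|_{M'}$ is nonzero; hence each $P\in\mathcal{P}'$ already lies in the set sought by the proposition.

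For the reverse inclusion, given a $\Theta$-compatible $M$ with $(0:_R M)=P$ and $\Theta|_M\neq 0$, I would study the $R$-submodule $N:=R\cdot\Theta(M)\subseteq\iota(E^{\alpha-1})\cap M$. It is nonzero, $\Theta$-compatible, and under $\iota^{-1}$ becomes a $\Theta'$-compatible submodule of $E^{\alpha-1}$. The short exact sequence $0\to N\to M\to M/N\to 0$, together with the primality of $P$, forces $P=(0:_R N)$ or $P=(0:_R M/N)$ by the standard argument that if $r$ annihilates both $N$ and $M/N$ then $r^2\in P$. When $P=(0:_R N)$ and $\Theta'|_N\neq 0$, the induction hypothesis places $P$ in $\mathcal{P}'$.

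The main obstacle is the complementary ``nilpotent'' case, when $\Theta'|_N=0$ (equivalently $\Theta^2|_M=0$) or $P$ arises only as $(0:_R M/N)$; here $P$ need not be in $\mathcal{P}'$ nor even $\Theta'$-special. To bound these extras I would use Lemma~\ref{Lemma: Zero action submodule} with $e=2$: any such witness $M$ must lie in $\EE{I_2(\Image U^{[p]}U)}$, so $P$ annihilates a $\Theta$-compatible submodule of this Artinian module on which $\Theta$ itself acts as zero, and Lemma~\ref{Lemma: associated of special is special} then forces $P$ to belong to the finite set of associated primes of $R^{\alpha}/I_2(\Image U^{[p]}U)$. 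Merging $\mathcal{P}'$ with this finite effectively computable list of extras gives the finite enumeration required by Theorem~\ref{Theorem: the main theorem}.
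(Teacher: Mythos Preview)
Your reduction via $N = R\Theta(M) \subseteq \iota(E^{\alpha-1})$ is natural, and the inclusion $\mathcal{P}' \subseteq \{\text{$\Theta$-special primes with nonzero witness}\}$ is correct. The gap is entirely in bounding the ``extras''.

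First, your appeal to Lemma~\ref{Lemma: associated of special is special} is backwards: that lemma says the associated primes of the annihilator of a $\Theta$-compatible module are themselves special; it does \emph{not} say a special prime must be associated to any fixed module. From $M \subseteq \EE{I_2(\Image U^{[p]}U)}$ you only obtain the containment $\bigl(0:_R R^\alpha/I_2(\Image U^{[p]}U)\bigr) \subseteq P$, which does not confine $P$ to a finite list. Second, in the sub-case ``$P$ arises only as $(0:_R M/N)$'' one may well have $\Theta^2|_M \neq 0$, so $M \not\subseteq \EE{I_2(\Image U^{[p]}U)}$ and the $e=2$ bound does not even apply. A concrete failure of the first point: take $R = \mathbb{K}[\![x]\!]$, $\alpha = 2$, and $U = \bigl(\begin{smallmatrix}0&0\\x^{p-1}&0\end{smallmatrix}\bigr)$. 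Then $U' = 0$ so $\mathcal{P}' = \emptyset$, and $U^{[p]}U = 0$ so $I_2(\Image U^{[p]}U) = 0$ and the only associated prime of $R^2$ is $(0)$. Yet $V = xR^2$ satisfies $UV \subseteq V^{[p]}$, the witness $M = \EE{V}$ has $\Theta|_M \neq 0$ and $\Theta^2|_M = 0$, and its annihilator $(x)$ lies in neither $\mathcal{P}'$ nor $\{(0)\}$.

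The paper's argument is structured quite differently. Rather than producing all special primes at once as $\mathcal{P}'$ plus a fixed finite correction, it climbs chains: for each special $P$ it finds the finitely many special $Q$ minimally containing $P$. When $\Theta_0$ (your $\Theta'$) is nilpotent on $\EE{PR^{\alpha-1}}$, it reads off the last row $(g_1,\dots,g_{\alpha-1},0)$ of an iterate $U^{[p^{e-1}]}\cdots U$ and reduces to the rank-one Frobenius maps $g_iT$ on $E$; this is exactly what recovers $(x)$ in the example above. When $\Theta_0$ is not nilpotent, the key device is the submodule $\tau K$, where $\tau$ is the intersection of the $\Theta_0$-special primes minimally above $P$ and $\EE{K}$ is the $\Theta_0$-nilpotent locus in $E^{\alpha-1}$: one proves $\tau K \subseteq \pi(W)$ but $\tau K \not\subseteq PR^{\alpha-1}$, which yields an explicit element of $\left(QR^\alpha\right)^{\star U}$ outside $PR^\alpha$ and thereby feeds into Lemma~\ref{Lemma: bootstrap with one element} or a genuine drop in $\alpha$ via Lemma~\ref{Lemma: reduce alpha}.
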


\begin{proof}
It suffices to show that, when given a special prime $P$, we can always find all special primes $Q$ that minimally contains $P$ (since we can always start with the special prime $(0)$). To this end, assume that $P$ is a special prime.

Let $\pi : R^\alpha \rightarrow  R^{\alpha-1}$ be the projection onto the first $\alpha-1$ coordinates,
let $U_0$ be the submatrix of $U$ consisting of its first $\alpha-1$ rows and columns. Define the Frobenius map $\Theta_0 : E^{\alpha-1} \rightarrow E^{\alpha-1}$ given by $\Theta_0=U_0^t T$.
Let $Q$ be a special prime minimally containing $P$, and  let $W= {Q R^\alpha}^{\star U}$ so that
$U W \subseteq W^{[p]}$ and $(0:_R R^\alpha/W)=Q$.

Our proof consists of a number of steps.

\noindent {\bf (a)}
$\EE{P R^\alpha}$ being an $R[\Theta; f]$-module is equivalent to
$P \Image U \subseteq P^{[p]} R^\alpha$, and this implies
that all entries in $U$ are in $(P^{[p]} : P)$. This shows that $\EE{ P R^{\alpha-1} }$ is an $R[\Theta_0; f]$-module and that $P$ is $\Theta_0$-special.

\noindent {\bf (b)}
Consider the case when the action of $\Theta_0$ on $\EE{P R^{\alpha-1}}$ is nilpotent. Pick $e\geq 1$ so that the restriction of $\Theta_0^e$ to $\EE{P R^{\alpha-1}}$ is zero.
Consider the matrix
$U^{[p^{e-1}]} U^{[p^{e-2}]} \cdots U$: denote its last row  $(g_1, \dots, g_{\alpha-1}, 0)$ and note that
its top left $(\alpha-1) \times (\alpha-1)$ submatrix is
$U_0^{[p^{e-1}]} U_0^{[p^{e-2}]} \cdots U_0$
and our assumption implies that the entries of this matrix are in  $P^{[p^e]} \subset Q^{[p^e]}$
so the action of $\Theta^e={U^{[p^{e-1}]}}^t {U^{[p^{e-2}]}}^t \cdots U^t$ on $\EE{W}$ is the same
as the action of a matrix $U_e$ whose first $\alpha-1$ rows are zero and its last row is
$(g_1, \dots, g_{\alpha-1}, 0)$.

Define $L$ as the union of
\begin{eqnarray*}
L_0 & = & Q R^\alpha\\
L_1 & = & I_e (U_e Q R^\alpha)  + Q R^\alpha \\
L_2 & = & I_e \left( U_e I_e (U_e Q R^\alpha)  + U_e Q R^\alpha \right) + I_e (U_e Q R^\alpha)  + Q R^\alpha= I_e (U_e Q R^\alpha)  + Q R^\alpha\\
\end{eqnarray*}
and the stable value at $L_1$
defines an $R[U_e T^e; f^e]$-module $\EE{L_1}$ whose annihilator is $Q$.
Now $U_e Q \subseteq Q^{[p]}$ so
$g_i Q \subseteq Q^{[p^e]}$ for all $1\leq i\leq \alpha-1$, hence $Q$ is $g_i T^e$-special for all $1\leq i\leq \alpha-1$.
One of these $g_i T^e$ must restrict to a non-zero map on $\EE{P}$ otherwise $g_i\in P^{[p^e]}$ for all $1\leq i\leq \alpha$ and
then the restriction of $\Theta^e$ to $\EE{P R^\alpha}$ would be zero.
We can now find all such $Q$ using the algorithm in section 5 of \cite{Katzman-Schwede}. This finishes our step {\bf (b)}.

Let $\tau\subset R$ be the intersection of the finite set of $\Theta_0$-special prime ideals minimally containing $P$ and
write the submodule
$\Nil(E^{\alpha-1}):=\{ z\in E^{\alpha-1} \,|\, \Theta_0^e z=0 \text{ for some } e\geq 0\}$
as $\EE{K}$ where $K\subseteq R^{\alpha-1}$.
Write $J=(0 :_R R^{\alpha-1}/\pi(W))$. Note that $J\supseteq Q$ and hence $J\supsetneq P$.

\noindent {\bf (c)}
Let $K_0=R^{\alpha-1}$ and define recursively $K_{j+1}=I_1( U_0 K_j)$ for all $j\geq 0$. Then clearly $K\subseteq K_0$.
Lemma \ref{Lemma: I1(uK)} implies that $I_1(UK)=K$, so if we assume inductively that $K\subseteq K_j$, then
$K=I_1(UK) \subseteq I_1(U K_j)=K_{j+1}$.

\noindent {\bf (d)}
We claim that $\tau K \subseteq (J R^{\alpha-1})^{\star U_0}$ and hence that $\tau K \subseteq \pi(W)$; and we reason as follows. Lemma \ref{Lemma: associated of special is special} shows that $\tau\subseteq \sqrt{J}$, and so for all large  $e\geq 0$  we have
$\tau^{[p^e]} \subseteq J$ and hence also
$$ \left( \tau^{[p^e]} K \right)^{\star U_0} \subseteq \left( J R^{\alpha-1} \right)^{\star U_0}  \subseteq \pi(W)^{\star U_0} =\pi(W) .$$

We compute $\left( \tau^{[p^e]} K \right)^{\star U_0}$ as the union of
\begin{eqnarray*}
L_0         & = & \tau^{[p^e]} K \\
L_1         & = & I_1\left( U_0 \tau^{[p^e]} K \right) + L_0 =   \tau^{[p^{e-1}]} I_1\left( U_0 K \right) + L_0  = \tau^{[p^{e-1}]} K_1 + L_0 \\
L_2         & = & I_1\left( U_0 \tau^{[p^{e-1}]} K_1 \right) + L_1 =   \tau^{[p^{e-2}]} I_1\left( U_0 K_1 \right) + L_1 = \tau^{[p^{e-2}]} K_2 + L_1\\
            & \vdots & \\
L_e         & = & \tau K_e + L_{e-1} \\
            & \vdots & \\
\end{eqnarray*}
and from (c) we deduce that
$\tau K \subseteq \tau K_e \subseteq L_e  \subseteq  \left( \tau^{[p^e]} K \right)^{\star U_0}$.

\noindent {\bf (e)}
If the action of $\Theta_0$ on $\EE{P R^{\alpha-1}}$ is not nilpotent, then we claim that $\tau K \not\subseteq P R^{\alpha-1}$ and $U_0 \tau K \not\subseteq P^{[p]} R^{\alpha-1}$. The action of $\Theta_0$ on $\EE{P R^{\alpha-1}}$ being not nilpotent
is equivalent to $K \not\subseteq P R^{\alpha-1}$. Since $\tau\not\subseteq P$ we obtain
$\tau K \not\subseteq P R^{\alpha-1}$.
If $U_0 \tau K \subseteq P^{[p]} R^{\alpha-1}$, then  $U_0 K \subseteq P^{[p]} R^{\alpha-1}$,
$K=I_1(U_0 K)\subseteq P R^{\alpha-1}$, and the action of $\Theta_0$ on $\EE{P R^{\alpha-1}}$ is nilpotent. This completes our step {\bf (e)}.

For any $v=(w_1, \dots, w_{\alpha-1}, w_\alpha)^t \in R^\alpha$, we define $\overline{w}=(w_1, \dots, w_{\alpha-1}, 0)^t$
and for any $V\subseteq R^\alpha$ let $\overline{V}$ denote $\{ \overline{v} \,|\, v\in V\}$.
Let $\iota : R^{\alpha-1} \rightarrow R^{\alpha-1} \oplus R$ be the natural inclusion $\iota(v)=v\oplus 0$.
Note that $\overline{V}=\iota (\pi(V))$.

\noindent {\bf (f)}
We claim $I_1 \left(U \iota (\tau K) \right)^{\star U}\subseteq W$ and $I_1 \left(U \iota (\tau K) \right)^{\star U}\not\subseteq P R^\alpha$.
  Define $W_1=\{ w\in W \,|\, \pi(w)\in \tau K \}$ and note that (d) implies that $\pi(W_1)=\tau K$.
  We have $W_1 \subseteq W$ hence $W_1^{\star U} \subseteq W^{\star U}=W$; also
  $W_1^{\star U} = I_1( U W_1)^{\star U} + W_1$
  and $U W_1=U \overline{W_1}=  U \iota (\tau K)$ hence
  $I_1 \left((U \iota (\tau K) \right)^{\star U}\subseteq W_1^{\star U}  \subseteq W$.

If $I_1 \left(U \iota (\tau K) \right)^{\star U} \subseteq P R^\alpha$
then $U_0\tau K=\pi\left(U \iota (\tau K)\right)  \subseteq P^{[p]} R^{\alpha-1}$, in contradiction to {\bf (e)}.

\noindent {\bf (g)}
Let $M^\prime$ be a matrix whose columns generate
$I_1 \left((U \iota (\tau K) \right)^{\star U}\subseteq   W$
and choose an entry $a$ in it which is not in $P$.

If  $a\in Q$,  Lemma \ref{Lemma: bootstrap with one element} shows that
$Q$ is among the minimal primes of  $\left( (P+R a) R^\alpha \right)^{\star U}$, and we are done (with finding such $Q$).

If $a\notin Q$, we can apply Lemma \ref{Lemma: reduce alpha} with the matrix $X$ with entries in $R_a$ such that
$\mathbf{e}_\alpha \in W^\prime=X W_a \cap R$
where $\mathbf{e}_\alpha$ is the vector $(0,0, \dots, 0, 1)^t\in R^\alpha$.
Now $R^\alpha/W^{\prime} \cong R^{\alpha-1}/\pi(W^{\prime})$,
$Q$ is an associated prime of $\left(0 :_R R^{\alpha-1}/\pi(W^{\prime}) \right)$,
and $Q$ is ${U^\prime}^t T$-special, with $U^\prime$ as defined in Lemma \ref{Lemma: reduce alpha}.

Now $Q$ is special for the Frobenius map obtained by the restriction of ${U^\prime}^t T$
to $\EE{W^{\prime}}$; part (b) and Lemma \ref{Lemma: reduce alpha}(c) shows that this map is not nilpotent
and we can apply the induction hypothesis to find $Q$. This finishes our last step {\bf (g)} and hence the proof of our proposition.
\end{proof}

Our next theorem provides an effective algorithm to find all special primes with the property that for some $R[\Theta; f]$-submodule $M\subseteq E^\alpha$ with $(0:_R M)=P$, the restriction of $\Theta$ to $M$ is not zero.

\begin{thm}\label{Theorem: Main Theorem}
Let $P$ be a special prime, let $Q$ be a special prime minimally containing $P$.
Let $M$ be a matrix whose columns generate $\left( P R^\alpha \right)^\star$.

\begin{enumerate}

  \item[(I)] Assume that $\Image M \supsetneq P R^\alpha$. Then either
  \begin{enumerate}
    \item[(a)] All entries of $M$ are in $Q$, hence there is such an entry $q\in Q\setminus P$, and $Q$ is among the minimal primes of
    $(0 :_R  R^\alpha/ ((P+qR) R^\alpha)^{\star})$, or
    \item[(b)] There exists an entry of $M$ which is not in $Q$, and $Q$ is a special prime of a Frobenius action on $E^{\alpha-1}$.
  \end{enumerate}

  \item[(II)] Assume that $\Image M = P R^\alpha$. There exist an $a_1\in R\setminus P$, a $g\in (P^{[p]}:P)$ and an $\alpha\times\alpha$ matrix $V$ such that
  for some $\mu>0$, $a_1^\mu U\equiv g V$ modulo $P^{[p]}$. Write $d=\det V$.
  Either
  \begin{enumerate}
    \item[(a)] $d\in P$, and $Q$ can be obtained as a special prime of a Frobenius action on $E^{\alpha-1}$,

    \item[(b)] $d\in Q\setminus P$, and $Q$ is among the minimal primes of
    $(0 :_R  R^\alpha/ ((P+dR) R^\alpha)^{\star})$, or

    \item[(c)] $d\notin Q$ and $Q$ can be obtained as a special prime of the Frobenius action on $gT$ on $E$.
  \end{enumerate}

\end{enumerate}
\end{thm}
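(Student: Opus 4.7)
The plan is to verify each listed subcase by tracing how $Q$ interacts with the star-closure of $PR^\alpha$ and, via the lemmata of Section \ref{Section: the case alpha>1}, reducing either to an application of Lemma \ref{Lemma: bootstrap with one element} or to a Frobenius action of lower rank for which the induction hypothesis applies.

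In Case (I) the assumption $\Image M \supsetneq PR^\alpha$ forces $M$ to have some entry outside $P$. If every such entry lies in $Q$, then any $q \in Q\setminus P$ among the entries of $M$ yields (a) via Lemma \ref{Lemma: bootstrap with one element} with $a=q$. Otherwise I pick an entry $a \notin Q$ of $M$, appearing as a coordinate of some column $v \in (PR^\alpha)^\star \subseteq (QR^\alpha)^\star =: W$. Since $a$ is a unit in $R_a$, there exists an invertible $\alpha\times\alpha$ matrix $X$ over $R_a$ with $Xv = \mathbf{e}_\alpha$, and Lemma \ref{Lemma: reduce alpha} then furnishes $U' = a^\nu X^{[p]} U X^{-1}$ with entries in $R$ and $W' = XW_a \cap R^\alpha$ satisfying $\mathbf{e}_\alpha \in W'$, $U' W' \subseteq (W')^{[p]}$, $Q$ minimal over $(0:_R R^\alpha/W')$, and non-nilpotence of ${U'}^t T$ on $\EE{W'}$ (this last point using that the hypothesis on $Q$ provides a $\Theta$-invariant submodule of $\EE{W}$ with annihilator $Q$ on which $\Theta$ acts non-trivially). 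Because $\mathbf{e}_\alpha \in W'$, $\EE{W'}$ is supported on the first $\alpha-1$ coordinates, and ${U'}^t T$ restricts to a Frobenius action on that copy of $E^{\alpha-1}$, proving (b).

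In Case (II) the equality $(PR^\alpha)^\star = PR^\alpha$ gives $U \cdot PR^\alpha \subseteq P^{[p]}R^\alpha$, so every entry of $U$ lies in $(P^{[p]}:P)$. The $F_*(R/P)$-module $(P^{[p]}:P)/P^{[p]} \cong \Hom_{R/P}(F_*(R/P), R/P)$ is of rank one (the Fedder-type input used in Theorem \ref{Theorem: the Utah Proof}(c)), so any non-zero $g$ in it serves as a common factor for the finitely many entries of $U$ after clearing a common denominator $a_1 \in R\setminus P$; this yields $\mu\ge 1$ and $V$ with $a_1^\mu U \equiv gV \pmod{P^{[p]}}$. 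With $d=\det V$, subcase (b) is immediate from Lemma \ref{Lemma: bootstrap with one element}. For (c), when $d\notin Q$ (and, by adjusting the choice of $a_1$ if necessary, $a_1\notin Q$, the alternative $a_1\in Q\setminus P$ already falling under (b)), $V$ becomes invertible over $R_Q$; the identity $a_1^\mu U \equiv gV \pmod{P^{[p]}}$, combined with the vanishing of $P^{[p]}\cdot T(z)$ for $z\in\EE{PR^\alpha}$, implies that on $\EE{PR^\alpha}_Q$ the Frobenius $\Theta$ agrees with $V^t\,(a_1^{-\mu} g T)$, and an ``untwisting'' by $V$ identifies $\Theta$-invariant submodules with annihilator $Q$ as being essentially direct sums of $gT$-invariant submodules of $E$, exhibiting $Q$ as $gT$-special. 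For (a), $d\in P$ forces $V$ to have a non-trivial kernel vector $v$ in $(R/P)^\alpha$; after localizing at some $b\notin P$ where a coordinate of $v$ becomes a unit and conjugating by a matrix sending $v$ to $\mathbf{e}_\alpha$, the new matrix $U'$ has its last column in $P^{[p]}R_b^\alpha$. Since modifying the last column by an element of $P^{[p]}$ leaves the action on $\EE{PR^\alpha}$ unchanged, I may replace $U'$ by a matrix $U''$ with literally zero last column and identical restriction to $\EE{PR^\alpha}$; Proposition \ref{Proposition: Zero column} applied to $U''$, together with the inductive hypothesis, then produces $Q$ as a special prime of a Frobenius action on $E^{\alpha-1}$.

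The chief obstacle will be the ``untwisting'' step in Case (II)(c), where I must render precise the claim that $V^t\,(gT) \pmod{P^{[p]}}$ on $\EE{PR^\alpha}_Q$ is equivalent to the scalar Frobenius $gT$ acting independently on each coordinate. This will involve organizing the localization at $Q$ carefully, verifying compatibility with $\EE{\cdot}$ and the $\star$-closure (for which Lemma \ref{Lemma: Ie and localization} is the key input), and checking that annihilator primes are preserved under the correspondence with $gT$-invariant submodules of $E$. A secondary bookkeeping hazard lies in (II)(a), namely the replacement $U'\rightsquigarrow U''$ and the re-verification that Proposition \ref{Proposition: Zero column} applies after the combined localization, conjugation, and $P^{[p]}$-truncation.
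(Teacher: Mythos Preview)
Your treatment of cases (I)(a), (I)(b), (II)(a) and (II)(b) matches the paper's argument essentially line for line: Lemma \ref{Lemma: bootstrap with one element} handles the cases where a known element lands in $Q\setminus P$, and Lemma \ref{Lemma: reduce alpha} plus Proposition \ref{Proposition: Zero column} handle the rank-reduction steps, just as you describe.

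The genuine divergence is in (II)(c). Your plan is to ``untwist by $V$'' so that the Frobenius map $V^t(gT)$ becomes the diagonal map $gT$ coordinatewise. The difficulty you flag is real and, as stated, does not obviously go through: conjugation in this setting is the \emph{Frobenius-twisted} operation $X^{[p]}(\,\cdot\,)X^{-1}$, so turning $gV$ into $gI_\alpha$ would require solving $V=(X^{[p]})^{-1}X$ over $R_a$, which has no reason to admit a solution. The paper sidesteps this entirely. It applies Lemma \ref{Lemma: reduce alpha} with $X=I_\alpha$ and $a=a_1 d$, and instead compares the two $\star$-closures $\left(QR^\alpha\right)^{\star U}_a$ and $\left(QR^\alpha\right)^{\star gI_\alpha}_a$ by their explicit iterative construction. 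The key point is elementary: since $d=\det V$ is a unit in $R_a$, $V$ is invertible there, so for any ideal $J$ one has $V\cdot JR^\alpha_a = JR^\alpha_a$; hence $I_1(gV\cdot JR^\alpha_a)=I_1(g\cdot JR^\alpha_a)$, and inductively every stage $L_i$ of the iteration is of the form $(\text{ideal})R^\alpha_a$ and is therefore $V$-stable. Thus the two iterations coincide termwise, giving $\left(QR^\alpha\right)^{\star U}_a=\left(QR^\alpha\right)^{\star gI_\alpha}_a$, whence $Q$ is $(gI_\alpha)T$-special and so $gT$-special. This avoids any need to diagonalize $V$ and is the missing idea in your (II)(c).
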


\begin{proof}
Choose $W_Q\subseteq R^\alpha$ such that $U W_Q \subseteq W_Q^{[p]}$ and such that
$(0 :_R R^\alpha/ W_Q)=Q$.

Assume first that we are in case (I).
If (a) we can choose an entry $q$ of $M$ such that $q\in Q\setminus P$.
Now  Lemma \ref{Lemma: bootstrap with one element} shows that $Q$ is among the minimal primes of
$\left(0:_R R^\alpha / \left((P+qR) R^\alpha\right)^\star\right)$.

Assume now that we are in case (I)(b), i.e., assume the existence of an entry of $M$ not in $Q$.
Note that with $W_Q$ as above we must have $W_Q \supseteq \left( Q R^\alpha \right)^\star\supseteq \left( P R^\alpha \right)^\star=\Image M$
and if we choose a matrix $M_Q$ whose columns generate $W_Q$, we see that $M_Q$ contains an entry not in $Q$.
We now apply Lemma \ref{Lemma: reduce alpha} with $W$ replaced by $W_Q$: there exists an invertible $\alpha\times \alpha$ matrix $X$ with entries in $R_a$
such that $X W_Q$ contains the elementary vector $e_\alpha:=(0, \dots, 0, 1)^t$, and with $U^\prime$ and $W^\prime$ as in the lemma,
we obtain $Q$ as a special prime of the Frobenius action ${(U^\prime)}^{t}$ on $\EE{W^\prime}$.
We note that $R^\alpha/W^\prime\cong R^{\alpha-1}/W^{\prime\prime}$ where $W^{\prime\prime}$ is the projection of $W^{\prime}$ onto its first
$\alpha-1$ coordinates, hence $Q$ is a special prime of the Frobenius action ${(U^\prime)}^{t}$ on $\EE{W^{\prime\prime}}$.
We may now apply the induction hypothesis and deduce that we have an effective method of finding $Q$.

Assume henceforth case (II) and note that $U P R^\alpha \subseteq P^{[p]} R^\alpha$ implies that the entries of $U$ are in $(P^{[p]} : P)$.
Write $S=R/P$; recall that $(P^{[p]} : P)/P^{[p]} \cong \Hom_S( F_* S, S)$ is an $S$-module of rank one, so we can find an element $a_1\in R\setminus P$
such that the localization of $(P^{[p]} : P)/P^{[p]}$ at $a_1$ is generated by one element $g/1+P_{a_1}^{[p]}$ as an $S_{a_1}$-module (and hence also as an $R_{a_1}$-module).
If $a_1\in Q$, we may construct $Q$ as in case (I)(a), so assume $a_1\notin Q$.

We can now write $a_1^\mu U=g V + V^\prime$ for some $\mu\geq 0$ and $\alpha\times\alpha$ matrices $V$ and $V^\prime$ with entries in $R$ and
$P^{[p]}$, respectively. Now the restriction of the Frobenius map  ${V^\prime}^t T$ to $\EE{P R^\alpha}$ is zero,
and hence so is its restriction to  $\EE{ W_Q}$, and we may, and do, replace $V^\prime$ with the zero matrix without affecting any issues.

Write $d=\det V$  and distinguish between three cases:
\begin{enumerate}
  \item[(1)] Assume $d\in P$. Working in the fraction field $\mathbb{F}$ of $S$ we can find an invertible matrix $X$ with entries in $\mathbb{F}$
  such that the last column of $V X^{-1}$ is zero. We can now find an element $a_2\in R\setminus P$ such that the entries of $X$ and $X^{-1}$ are in
  the localization $R_{a_2}$; write $a=a_1 a_2$.

  We now apply Lemma \ref{Lemma: reduce alpha} to deduce, using the Lemma's notation, that $Q$ is a special prime of the Frobenius map ${U^\prime}^t T$ where
  $U^\prime=a^\nu X^{[p]} U X^{-1}$, $Q=(0 :_R R^\alpha / W^\prime)$, and
  $(U^\prime)^t T : \EE{W^\prime} \rightarrow  \EE{W^\prime}$ is not nilpotent.
  We note that the last column of $U^\prime$ is zero and hence we can produce $Q$ using Proposition \ref{Proposition: Zero column}.

  \item[(2)]
  If $d\in Q\setminus P$, we may construct $Q$ as in case (I)(a).

  \item[(3)]
  Finally we may assume that $d\notin Q$.
  We now apply Lemma \ref{Lemma: reduce alpha} with $a=a_1 d$, $W=\left( Q R^\alpha \right)^{\star U}$ and $X=I_\alpha$, the
  the $\alpha \times \alpha$ identity matrix. With the notation of that Lemma, we have $W^\prime = \left( Q R^\alpha \right)^{\star U}_a \cap R^\alpha$.
  We now explicitly compute $\left( Q R^\alpha \right)^{\star U}_a$ as the union of the sequence
  \begin{eqnarray*}
  L_0 & = & Q R^\alpha_a \\
  L_1 & = & I_1( U Q R^\alpha_a ) + Q R^\alpha_a = I_1( g V Q R^\alpha_a ) + Q R^\alpha_a = I_1( g Q R^\alpha)_a + Q R^\alpha_a\\
  L_2 & = & I_1( g L_1) + L_1\\
  &\vdots&
  \end{eqnarray*}
  and we compare this to $\left( Q R^\alpha \right)^{\star g I_\alpha}_a$ explicitly computed  as the union of the sequence
  \begin{eqnarray*}
  L_0^\prime & = & Q R^\alpha_a \\
  L_1^\prime & = & I_1( g Q R^\alpha_a ) + Q R^\alpha_a = I_1( g Q R^\alpha )_a + Q R^\alpha_a\\
  L_2^\prime & = & I_1( g L_1^\prime) + L_1^\prime\\
  &\vdots&
  \end{eqnarray*}
  where we used Lemma \ref{Lemma: Ie and localization} in the third equalities for $L_1$ and $L_1^\prime$.
  The fact that $L_1=L_1^\prime$ implies that $L_i=L_i^\prime$ for all $i\geq 1$ and hence
  $\left( Q R^\alpha \right)^{\star U}_a=\left( Q R^\alpha \right)^{\star g I_\alpha}_a$.
  Now $W^\prime = \left( Q R^\alpha \right)^{\star U}_a \cap R^\alpha=\left( Q R^\alpha \right)^{\star g I_\alpha}_a \cap R^\alpha$,
  Lemma \ref{Lemma: reduce alpha} implies that $Q$ is a minimal prime of $R^\alpha/W^\prime$ and hence
  $Q$ is $(g I_\alpha)T$-special.

  We can now deduce that $Q$ is $gT$-special, and we find $Q$ using the case $\alpha=1$, provided that
  $P$ is also $gT$-special and $g T: \EE{ P} \rightarrow \EE{P}$ is non zero. The former follows from the fact that $g\in (P^{[p]}:P)$
  and the latter from the fact that the image of $g+P^{[p]}$ after localization at the fraction field of $S$ generates a non-zero module,
  hence $g\notin P^{[p]}$.

\end{enumerate}
\end{proof}

Theorem \ref{Theorem: Main Theorem} also finishes the induction step of the proof of Theorem \ref{Theorem: the main theorem}. For the sake of completeness, we end with a proof of Theorem \ref{Theorem: the main theorem}.

\begin{proof}[Proof of Theorem \ref{Theorem: the main theorem}]
We will use induction on $\alpha$. When $\alpha=1$, our theorem has been proved in Section 4. Assume that $\alpha>1$ and our theorem has been established for $\alpha-1$. Let $P$ be a special prime (we can always start with $P=(0)$) and let $M$ be a matrix whose columns generate $\left( P R^\alpha \right)^\star$, then there will be two cases: (I). $\Image M \supsetneq P R^\alpha$; (II). $\Image M = P R^\alpha$. As proved in Theorem \ref{Theorem: Main Theorem}, in either cases there are only finitely many special primes $Q$ minimally containing $P$ (by our induction hypothesis). Since only finitely many special primes are produced at each step and the number of steps is bounded by the dimension of $R$, there are only finitely special primes with the desired property.
\end{proof}

\section{The algorithm in action and two calculations}
\label{Section: The algorithm in action and two calculations}

We first piece together all the results of the previous sections into an explicit algorithm.

\begin{description}
\item{\bf Input}
\begin{itemize}
\item A ring $R=\mathbb{K}[x_1, \dots, x_n]$ where $\mathbb{K}$ is a field of prime characteristic $p$, and
\item A $\alpha\times \alpha$ matrix $U$ with entries in $R$ such that $U^t T$ is not a nilpotent Frobenius map on $E^\alpha$.
\end{itemize}

\item{\bf Output}
\begin{itemize}
The list $\mathcal{A}$ of all $U^t T$-special primes $Q$ with the property that the
Frobenius map on $\EE{Q R^\alpha}$ is not nilpotent.
\end{itemize}

\item{\bf Initialize}\\
$\mathcal{A}=\{ 0 \}$ , $\mathcal{B}=\emptyset$.

\item{\bf Execute the following}\\
If $\alpha=1$ use the algorithm described in \cite{Katzman-Schwede} to find the desired
special primes, put these in $\mathcal{A}$, output it, and stop.

While $\mathcal{A}\neq \mathcal{B}$, pick any $P\in \mathcal{A}\setminus \mathcal{B}$,
write $(P R^\alpha)^{\star U}$ as the image of a matrix $M$ and do the following:
\begin{enumerate}
  \item[(1)] If there is an entry $a$ of $M$ which is not in $P$ then
\begin{enumerate}
\item[(1a)] add to  $\mathcal{A}$ the minimal primes of the annihilator of $R^\alpha /((P+aR) R^\alpha)^{\star U}$, and
\item[(1b)]
find an $\alpha \times \alpha$ invertible matrix $X$ with entries in $R_a$ such that $\Image M R_a$
contains the $\alpha$th elementary vector,
choose $\nu\gg 0$ such that
$U^\prime = X^{[p]} U X^{-1}$ has entries in $R$,
let $U_0$ be the submatrix of $U$ consisting of its first $\alpha-1$ rows and columns,
find recursively the special primes of $U_0^T T$,
add to $\mathcal{A}$ those which are also $U^T T$-special.
\end{enumerate}

  \item[(2)] If  $(P R^\alpha)^{\star U}=(P R^\alpha)$ find $a_1\in R\setminus P$, $g\in (P^{[p]}:P)$,
  $\alpha\times \alpha$ matrix $V$ and $\mu>0$ such that $a_1^\mu \equiv g V$ modulo $P^{[p]}$.
  Compute $d=\det V$.
\begin{enumerate}
\item[(2a)] If $d\in P$, find an element $a_2\in R\setminus P$ and an invertible matrix with entries in $R_{a_2}$
such that the last column of $UX^{-1}$ is zero.
Find $\nu\gg 0$ such that the entries of $U_1(a_1 a_2)^\nu X^{[p]} U X^{-1}$ are in $R$.
Let $U_0$ be the submatrix of $U_1$ consisting of its first $\alpha-1$ rows and columns.
\begin{enumerate}
  \item[(2a)(i)] If the restriction of $(U_0^t T)^e$ to $\EE{P R^{\alpha-1}}$ is zero for some $e\geq 0$,
  write the last row of $U_1^{[p^e]} U_1^{[p^{e-1}]} \dots U_1$ as $(g_1, g_2, \dots, g_{\alpha-1}, 0)$, find all
  $g_i T$-special primes as in \cite{Katzman-Schwede} and add to $\mathcal{A}$ those which are also
  $U^t T$-special.
  \item[(2a)(ii)] If the restriction of $U_0^t T$ to $\EE{P R^{\alpha-1}}$ is not nilpotent,
compute recursively all $U_0^t T$-special primes minimally containing $P$ and their intersection $\tau$,
find $K\subseteq R^{\alpha-1}$ such that $\EE{K}$ is the module of $U_0^t T$-nilpotent elements,
compute $I_1\left( U_1 \iota (\tau K)\right)^{\star U_1}$ and write this as the image of a matrix $M^\prime$.
Find a entry $a$ in $M^\prime$ not in $P$. Now
\begin{itemize}
\item Add to $\mathcal{A}$ all the minimal primes of $R^\alpha/(P+aR)^{\star U_1}$ which are also
$U^t T$-special.
\item Find an invertible matrix $X$ with entries in $R_a$ such that $\Image M^\prime R_a$ contains
the $\alpha$th elementary vector. Find $\nu\gg  0$ such that $U_2=a^\nu X^{[p]} U_1 X^{-1}$ has entries in $R$.
Let $U_3$ be the submatrix of $U_2$ consisting of its first $\alpha-1$ rows and columns.
Compute recursively all $U_3^t T$-special primes and add those which are also $U^t T$-special to $\mathcal{A}$.
\end{itemize}
\end{enumerate}

\item[(2b)] If $d\notin P$, add to  $\mathcal{A}$ the minimal primes of the annihilator of $R^\alpha ((P+dR) R^\alpha)^{\star U}$.
\item[(2c)] If $d\notin P$, use the algorithm described in \cite{Katzman-Schwede} to find the $gT$-special primes and add to $\mathcal{A}$
those which are also $U^t T$-special.
\end{enumerate}

\item[(3)] Add $P$ to  $\mathcal{B}$.
\end{enumerate}

Output $\mathcal{A}$ and stop.
\end{description}

We now apply the algorithm to the calculation of special primes in two examples.
The first, illustrates the trick of reducing $\alpha=2$ to a calculation with $\alpha=1$, and the second
illustrates a case where Theorem \ref{Theorem: Main Theorem}(II)(a), and hence, Proposition \ref{Proposition: Zero column}, needs to be applied.

\subsection{First example}
Let
$R=\mathbb{Z}/2\mathbb{Z}[x,y,z]$ and let
$$U=\left( \begin{array}{cc} x^3+y^3+z^3 & xy^2z^5 \\ x(y^2 + z^2) & x^3 \end{array} \right)$$

We start with the special prime $P_0=0$:
to find the special primes minimally containing $P_0$, we apply Theorem \ref{Theorem: Main Theorem} and find ourselves in
case (II) with $g=1$, $U=V$, and $d=\det U=x^2(x(y^3+z^2)+x^4+y^2z^5(y^2+z^2))$.
We look for special primes containing $d$ as in Theorem
\ref{Theorem: Main Theorem}(II)(b):
$$ \left({dR^2}\right)^{\star U}=\Image
\left( \begin{array}{cccc}
y & z & 0 & x\\
0 & 0 & x & y+z\\
\end{array}
\right).
$$
The annihilator of  $R^2/\left({dR^2}\right)^{\star U}$
has a unique minimal prime
$P_1=(x, y+z) R$, hence $P_1$ is a special prime.
We look for special primes not containing $d$ as in 
Theorem \ref{Theorem: Main Theorem}(II)(c):
these would contain $g=1$, hence there aren't any.

Next we find special primes $Q$ containing $P_1$.

We compute
$$\left(P_1{R^2}\right)^{\star U}=
\Image
\left( \begin{array}{ccccc}
x & y & z & 0 & 0\\
0 & 0 & 0 & x & y+z\\
\end{array}
\right) \supsetneq P_1 R^2
$$
and we are in case (I) of Theorem  \ref{Theorem: Main Theorem}.
We first consider the cases $y\in Q$ and $z\in Q$ which give:
$$\left((x,y,z) R^2\right)^{\star U}=(x,y,z) {R^2}$$ and we obtain the special prime
$P_2=(x,y,z)R$.
Assume now that $y\notin Q$,  let
$$X=
\left( \begin{array}{cc}
1/y & 0 \\ 0 & 1\\
\end{array}
\right)
$$
and compute
$$ U^\prime = y X^{[2]} U X^{-1}=
\left( \begin{array}{cc} x^3+y^3+z^3 & xyz^5 \\ x y^2 (y^2 + z^2) & x^3 y \end{array} \right).$$
We now deduce that $Q$ must be a $x^3 y$-special prime; these are computed with the algorithm in \cite{Katzman-Schwede}
to be
$xR$, $yR$ and $(x,y)R$.
We compute
$\Ann_R R/ (x{R^2})^{\star U}=P_1$,
$\Ann_R R/ (y{R^2})^{\star U}=P_2$, and
$\Ann_R R/ ((x,y){R^2})^{\star U}=P_2$
so $xR$, $yR$ and $(x,y)R$ are not $U$-special.

We conclude that the set of special primes is $\{ P_0, P_1, P_2 \}$.

\subsection{Second example}

Let $R=\mathbb{Z}/2\mathbb{Z}[x,y,z]$, $f=x^3+y^3+z^3$, $g=x^2+z^4$ and define
$$U=\left( \begin{array}{cc} x f & y f \\ x g & y g \end{array} \right).$$

We start with the special prime $P_0=0$, and find the special primes $Q$ minimally containing $P_0$
by following
Theorem \ref{Theorem: Main Theorem}(II)(a) as follows:
with
$$X^{-1}=
\left( \begin{array}{cc} 1 & -y/x \\ 0 & 1 \end{array} \right)
$$
we compute
$$U^\prime=x X^{[2]} U X^{-1} =\left( \begin{array}{cc} x^2 f + y^2 g& 0 \\ x^2 g & 0 \end{array} \right) .$$
The $U^t$-special primes
either contain $x$ or are ${U^\prime}^t$-special primes.
To find the former we find that $P_1=(x,z)R$ is the only minimal prime of the annihilator of
$R^2/{(x R^2)}^{\star U}$ and we add it to the list of $U^t$-special primes.

We now find the ${U^\prime}^t$-special primes using Proposition \label{Proposition: Zero column}
as follows.
Write $U_0=(x^2 f + y^2 g)$ and compute $K={R}^{\star U_0}=(x,y)R \cap (x,z^2)R \cap (x^3,y,z)R$
and since this is not zero, $U_0 T$ is not nilpotent on $\EE{P_0}$ and we can proceed to find the
${U^\prime}^t$-special primes using Proposition \ref{Proposition: Zero column}(g).
We compute the set of $U_0 T$-special primes to be
$\{ \tau R, (x,y)R, (x,z)R \}$ where $\tau =(y^2 z^4+x^2(x^3 y^3+z^3+y^2))R$;
the intersection of these is
$\tau R$.
We now compute
$$I_1 \left( U \iota(\tau K)\right)^{\star U_0}=
\Image
\left( \begin{array}{ccccc}
 y & z & x & 0\\
 0 & 0 & y+z & x\\
\end{array}
\right);
$$
We now look for special primes which contain $x$, and were considered above, and those $Q$ which do not contain $x$.
For the latter we apply Lemma \ref{Lemma: reduce alpha} with $X$ being the identity, and
deduce that $Q$ is $U_0 T$-special which we found above. To test whether any of
$\tau R$,  $(x,y)R$, and  $(x,z)R$  is $U_0^t T$-special we compute
$(\tau R^2)^{\star U_0}=(x,y,z)R^2$,
$((x,y) R^2)^{\star U_0}=(x,y,z)R^2$, and
$((x,z) R^2)^{\star U_0}=(x,y,z)R^2$ so none of this is $U_0^t T$-special.
We conclude that the only $U_0^t T$-special is $0$.

We iterate the algorithm again, this time with the special $U^t T$-prime $P=P_1=(x,z)R$.
We compute
$$(P_1 R^2)^{\star U}=
\Image
\left( \begin{array}{ccccc}
x & y & z & 0 & 0\\
0 & 0 & 0 & x & z\\
\end{array}
\right) \supsetneq P_1 R^2
$$
and with Theorem \ref{Theorem: Main Theorem}(I) we look for special $U^t T$-primes minimally containing
$P_1$ among those containing $y$ and those not containing $y$.
The former must contain the minimal primes of the annihilator or
$$R^2/{(yR+P_1)}^{\star U}=R^2/(x,y,z)R^2$$
hence the only such special prime is  $P_2=(x,y,z)R.$
For those special primes which do not contain $y$, and application of Lemma \ref{Lemma: reduce alpha} (with $X$ as
the identity matrix) shows that that those special primes are $yg$ special.
These special primes can be computed to be
$y R$, $(y, x+z^2)R$ and $(x+z^2)$.
Only the last excludes $y$ and none contain $P_1$ so none yield new $U^tT$-special primes.

We conclude that the only $U^tT$-special primes are $P_0=0$, $P_1=(x,z)$ and $P_2=(x,y,z)$.

\section{Connections with Frobenius near splittings}
\label{Section: Connections with Frobenius near splittings}
Recall that a \emph{Frobenius near-splitting} of $R$ is an element $\phi$ of $\Hom_R (F_* R, R)$, and, if, additionally,
$\phi(1)=1$, we call $\phi$ a \emph{Frobenius splitting}. The results in \cite{Katzman-Schwede} used in this paper were mainly in terms of
ideals $I\subseteq R$ stable under a given Frobenius near-splitting, that is, ideals $I\subseteq R$ for which
$\phi(F_* I)\subseteq I$, and later in section 6.2 there a connection was established with submodules stable under a given Frobenius map on $E$.

Here we go the other way around and, having established a method for finding the radical annihilators of submodules of
$E^\alpha$, we show how this method finds the annihilators of certain modules stable under the following generalization of Frobenius near-splittings.

\begin{defi}
A \emph{Frobenius near-splitting} of $R^\alpha$ is an element $\phi$ of $\Hom_R (F_* R^\alpha, R^\alpha)$.
Given such a Frobenius near-splitting $\phi$, we call a submodule $V\subseteq R^\alpha$ $\phi$-compatible if
$\phi( F_* V) \subseteq V$.
\end{defi}

Thus if $V\subseteq R^\alpha$ is $\phi$-compatible we have a commutative diagram
\begin{equation*}
\xymatrix{
F_* R^\alpha  \ar@{>}[r]^{\phi} \ar@{>>}[d] & R^\alpha \ar@{>>}[d]\\
F_* R^\alpha / F_* V \ar@{>}[r]^{\phi}& R^\alpha/V\\
}
\end{equation*}
and if we take Matlis duals we obtain
\begin{equation}\label{eqn10}
\xymatrix{
\Hom(F_* R^\alpha, E)   & E^\alpha \ar@{>}[l]^{\phi^\vee}\\
\Hom(F_* R^\alpha / F_* V, E)   \ar@{^{(}->}[u] & \EE{V} \ar@{>}[l]^{\phi^\vee} \ar@{^{(}->}[u]\\
}
\end{equation}

The following establishes in the $F$-finite case
the connection between the annihilators of submodules of $E^\alpha$ fixed under a Frobenius map and the
annihilators of quotients of $R^\alpha$ by submodules compatible under a Frobenius near-splitting.

\begin{prop}
Assume that $R$ is $F$-finite. Let $\phi\in \Hom_R (F_* R^\alpha, R^\alpha)$ and let $V\subseteq R^\alpha$ be a $\phi$-compatible submodule.
Then $\phi^\vee$, the Matlis dual of $\phi$, is a Frobenius map on $E^\alpha$ with the property that
$\phi^\vee\left( \EE{V} \right) \subseteq  \EE{ V}$ and the annihilator of $R^\alpha/V$ coincides with that of
$\EE{V}$.

Hence the method of Theorem \ref{Theorem: the main theorem} finds
all radical annihilators of quotients $R^\alpha/V$ for $\phi$-compatible submodules $V$
for which $\phi(F_* R^\alpha)\not\subseteq V$.

\end{prop}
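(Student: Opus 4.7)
The plan is to reduce the statement to Theorem~\ref{Theorem: the main theorem} by Matlis-dualizing the data: once I know that $\phi^\vee$ is a genuine Frobenius map on $E^\alpha$, that $\EE{V}$ is a $\phi^\vee$-stable submodule, that $\Ann(R^\alpha/V) = \Ann(\EE{V})$, and that $\phi(F_*R^\alpha)\not\subseteq V$ translates to the restriction of $\phi^\vee$ to $\EE{V}$ being non-zero, the proposition follows from the fact (cited at the start of the paper) that the set of radical annihilators is finite and is obtained as intersections of the special primes produced by the method.

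First I would check the Frobenius-map property. The $F$-finite hypothesis guarantees that $F_*R^\alpha$ is finitely generated over $R$, so Matlis duality yields a canonical isomorphism $(F_*R^\alpha)^\vee \cong F_*(R^\alpha)^\vee = F_*E^\alpha$. Consequently $\phi^\vee \in \Hom_R(E^\alpha, F_*E^\alpha)$, which is by definition a Frobenius map on $E^\alpha$. This identification is the only step where $F$-finiteness is essential.

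Next, the $\phi$-compatibility $\phi(F_*V)\subseteq V$ says that $\phi$ descends to a map $\bar\phi : F_*R^\alpha/F_*V \to R^\alpha/V$; diagram~(\ref{eqn10}) is precisely the Matlis dual picture, and it exhibits the restriction of $\phi^\vee$ to $\EE{V}=(R^\alpha/V)^\vee$ as the dual of $\bar\phi$, with image landing in $\Hom(F_*R^\alpha/F_*V,E)\cong F_*\EE{V}$; thus $\EE{V}$ is $\phi^\vee$-stable. The equality $\Ann(R^\alpha/V)=\Ann(\EE{V})$ is the standard fact that over a complete local ring the faithful injective cogenerator $E$ makes annihilators commute with $(-)^\vee$: multiplication by $r$ vanishes on $M$ iff it vanishes on $M^\vee$. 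In particular the radicals coincide.

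Finally, $\phi(F_*R^\alpha)\subseteq V$ is precisely the condition $\bar\phi=0$, which by Matlis duality is equivalent to the restriction of $\phi^\vee$ to $\EE{V}$ being zero. Thus $V\mapsto \EE{V}$ sets up a bijection between $\phi$-compatible submodules $V\subseteq R^\alpha$ with $\phi(F_*R^\alpha)\not\subseteq V$ and $\phi^\vee$-stable submodules of $E^\alpha$ on which $\phi^\vee$ acts nontrivially, under which radical annihilators match. Applying Theorem~\ref{Theorem: the main theorem} to $\phi^\vee$ produces the finitely many relevant special primes on the dual side, and every radical annihilator of a quotient $R^\alpha/V$ of the stated form is an intersection of these, as desired. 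The only technical obstacle I expect is keeping track of the $F_*$ decoration through the Matlis-dual identification in the first step; the rest is formal.
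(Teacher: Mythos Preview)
Your proposal is correct and follows essentially the same approach as the paper: both arguments use $F$-finiteness to identify $\Hom_R(F_*R^\alpha,E)$ with $F_*E^\alpha$ so that diagram~(\ref{eqn10}) exhibits $\phi^\vee$ as a Frobenius map preserving $\EE{V}$, then invoke the standard Matlis-duality fact that $R^\alpha/V$ and its dual share the same annihilator, and finally observe that $\phi(F_*R^\alpha)\subseteq V$ is equivalent to the induced map $F_*R^\alpha/F_*V\to R^\alpha/V$ vanishing, hence to $\phi^\vee|_{\EE{V}}=0$. The only cosmetic difference is that the paper phrases the key identification via freeness of $F_*R$ (which holds since $R$ is regular and $F$-finite) whereas you invoke finite generation; either suffices for the isomorphism $\Hom_R(F_*R,E)\cong F_*E$.
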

\begin{proof}

Since $R$ is $F$-finite,  $F_* R$ is a free $R$-module.
In this case one has a natural isomorphism
$\Hom_R(F_* R, E) \cong F_* E$ and diagram (\ref{eqn10}) is identified with
\begin{equation}
\xymatrix{
F_* E^\alpha   & E^\alpha \ar@{>}[l]^{\phi^\vee}\\
\Ann_{F_* E^\alpha}  F_* V^t   \ar@{^{(}->}[u] & \Ann_{E^\alpha} V^t \ar@{>}[l]^{\phi^\vee} \ar@{^{(}->}[u]\\
}
\end{equation}

We now recall that $R^\alpha/V$ and its Matlis dual have the same annihilator and to
establish the last claim we note that the restriction of $\phi^\vee$ to $\Ann_{E^\alpha} V^t$ is zero
precisely when the map  $F_* R^\alpha/ F_* V \rightarrow R^\alpha/ V$ vanishes, i.e.,
when $\phi(F_* R^\alpha)\subseteq V$.

\end{proof}

The correspondence between Frobenius near-splittings and Frobenius maps in the non-$F$-finite case may be far more complicated
(for example, cf.~section 4 in \cite{Katzman2}.)

We shall assume for the rest of this section that $R$ is $F$-finite and we will exhibit a more explicit connection between Frobenius map and near-splittings as follows.
We can choose a free basis for $F_* R=\mathbb{K}[\![x_1, \dots, x_d]\!]$ over $R$ which contains $F_* x_1^{p-1} \cdots x_d^{p-1}$ and
we let $\pi\in\Hom_R(F_* R, R)$ denote the projection onto the free summand $F_* x_1^{p-1} \cdots x_d^{p-1}$.
\begin{prop}\label{Proposition: explicit from of near-splittings}
Let $\alpha$ be a positive integer and write $\Phi\in \Hom_R(F_* R^\alpha, R^\alpha)$ for the direct sum of $\alpha$ copies of $\pi$.
\begin{enumerate}
\item[(a)] Any  $\phi\in \Hom_R(F_* R^\alpha, R^\alpha)$ has the form $\phi=\Phi \circ F_* U$ where $U$ is an $\alpha\times \alpha$ matrix
with entries if $R$.
\item[(b)] For any $\phi=\Phi \circ F_* U$ as in (a) and any $F_* R$-submodule $F_* V\subseteq F_* R^\alpha$,
$\phi(F_* V)=I_1 (U V)$.
\end{enumerate}
\end{prop}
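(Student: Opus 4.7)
The plan is to establish (a) first, since (b) will follow by a short computation built on top of it.

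For (a), I would decompose any $\phi \in \Hom_R(F_*R^\alpha, R^\alpha)$ into an $\alpha \times \alpha$ matrix of scalar maps $\phi_{ij} \in \Hom_R(F_*R, R)$ obtained by composing with the inclusion $F_*R \hookrightarrow F_*R^\alpha$ into the $j$-th summand and the projection $R^\alpha \twoheadrightarrow R$ onto the $i$-th coordinate. The problem thereby reduces to the claim that every element of $\Hom_R(F_*R, R)$ has the form $\pi \circ F_*u$ for some $u \in R$, i.e., that $\pi$ generates $\Hom_R(F_*R, R)$ as an $F_*R$-module under the action $(F_*s \cdot \psi)(F_*v) = \psi(F_*(sv))$. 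Once this cyclicity is available, setting $U = (u_{ij})$ with $\phi_{ij} = \pi \circ F_*u_{ij}$ yields $\phi = \Phi \circ F_*U$ via the direct coordinate check $\Phi(F_*(Uv))_i = \sum_j \pi(F_*(u_{ij}v_j)) = \phi(F_*v)_i$.

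To prove $\pi$ generates $\Hom_R(F_*R, R)$ as an $F_*R$-module, I would exploit that $F_*R$ is free over $R$ on the chosen basis $\mathcal{B}$ containing $F_*b_0$ with $b_0 = x_1^{p-1}\cdots x_d^{p-1}$; thus $\Hom_R(F_*R, R)$ has an $R$-dual basis $\{\pi_b\}_{b \in \mathcal{B}}$ with $\pi_{b_0} = \pi$. What must be shown is that each $\pi_b$ has the form $F_*r_b \cdot \pi$ for some $r_b \in R$, equivalently that the symmetric $R$-bilinear form $B(F_*s, F_*t) := \pi(F_*(st))$ on $F_*R$ is perfect. With $\mathcal{B}$ chosen compatibly with the monomial-complement involution $x^a \leftrightarrow x^{(p-1,\ldots,p-1)-a}$, the form $B$ is block-anti-diagonal, and each block reduces to the pairing on $\mathbb{K}/\mathbb{K}^p$ given by projection onto $1$ in a fixed $p$-basis; this last form is known to be perfect in the $F$-finite case, $\mathbb{K}/\mathbb{K}^p$ being a Frobenius extension. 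This is the one genuinely non-formal ingredient in the whole proof.

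For (b), substituting the factorization yields $\phi(F_*V) = \Phi(F_*(UV))$, where $UV$ denotes the image of $V$ under the matrix $U$. It therefore suffices to prove the identity $\Phi(F_*W) = I_1(W)$ for every $R$-submodule $W \subseteq R^\alpha$, and working coordinate-wise reduces this to the case $\alpha = 1$: $\pi(F_*W) = I_1(W)$ for $W \subseteq R$. Proposition \ref{Proposition: Computing Ie}(b) identifies $I_1(Rw)$ with the $R$-module generated by the coefficients $\{c_b(w)\}_{b \in \mathcal{B}}$ in the unique expansion $w = \sum_b c_b(w)^p b$, and by construction $\pi(F_*w) = c_{b_0}(w)$, which gives $\pi(F_*W) \subseteq I_1(W)$ immediately. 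For the reverse inclusion, I would invoke (a): each $\pi_b$ equals $\pi \circ F_*r_b$, so $c_b(w) = \pi_b(F_*w) = \pi(F_*(r_b w)) \in \pi(F_*W)$ because $r_b w \in W$, accounting for all generators of $I_1(W)$.
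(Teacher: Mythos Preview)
Your proposal is correct and follows the same outline as the paper: for (a) reduce to $\alpha=1$ and show that $\pi$ generates $\Hom_R(F_*R,R)$ as an $F_*R$-module, and for (b) unpack the identity $\Phi(F_*W)=I_1(W)$ via the explicit description of $I_1$ in Proposition~\ref{Proposition: Computing Ie}(b). Your perfect-pairing argument in (a), reducing to the one-dimensionality of $\Hom_{\mathbb{K}}(F_*\mathbb{K},\mathbb{K})$ over the field $F_*\mathbb{K}$, is a clean conceptual variant of the paper's explicit formula for the dual projections; the paper's (b) is the one-line ``both sides are the outcome of the same calculation'', which your argument makes precise.

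One caution on phrasing in (b): ``working coordinate-wise reduces this to the case $\alpha=1$'' is not a valid reduction of the \emph{statement}, since for $W\subseteq R^\alpha$ neither $\Phi(F_*W)$ nor $I_1(W)$ is determined by the coordinate projections of $W$ (take $W=R(1,1)\subseteq R^2$). What is true---and what your subsequent argument actually establishes---is that the $\alpha=1$ \emph{proof} carries over verbatim to vectors: the expansion $w=\sum_b c_b(w)^{[p]}b$ and the key identity $c_b(w)=\pi_b(F_*w)=\Phi(F_*(r_b w))$ hold componentwise, so the containments $\Phi(F_*W)\subseteq I_1(W)$ and $I_1(W)\subseteq\Phi(F_*W)$ follow exactly as you wrote them, with $c_b(w)\in R^\alpha$ throughout.
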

\begin{proof}
Since $\Hom_R(F_* R^\alpha, R^\alpha)=\Hom_R(F_* R, R)^{\alpha \times \alpha}$ we need to show (a) holds for $\alpha=1$.
Let $\mathcal{B}$ a $\mathbb{K}$-basis for $F_* \mathbb{K}$ which contains $1\in \mathbb{K}$;
we obtain a free basis $\{ F_* b x_1^{\gamma_1} \cdots x_d^{\gamma_d} \,|\, b\in\mathcal{B}, 0\leq \gamma_1, \dots, \gamma_d < p \}$
for the free $R$-module $F_* R$. As an $R$-module, $\Hom_R(F_* R, R)$ is generated by the projections
$\pi_{b,\gamma_1, \dots, \gamma_d}$ onto the
free summands $F_* b x_1^{\gamma_1} \cdots x_d^{\gamma_d}$; moreover,
$\pi_{b,\gamma_1, \dots, \gamma_d}=\pi \circ F_* b^{-1} x_1^{p-\gamma_1} \cdots x_d^{p-\gamma_d}$
hence every element in $\Hom_R(F_* R, R)$ is given by $\pi \circ F_* u$ for some $u\in R$.

It is enough to establish (b) when $V$ is generated by one element $v\in F_* R^\alpha$. The equality
$\Phi(F_* R  U v)=I_1 (U R v)$
in (b) now follows from the fact that both sides are obtained as the outcome of the same calculation.
\end{proof}

Given an $n\times n$ matrix $U$ with entries in $R$, Proposition 7.3 exhibits (in the F-finite case) a correspondence between submodules $E(V)$ of $E^n$ fixed under the Frobenius map $U^t T$ and submodules $V$ of $R^n$ for which the near-splitting $\Phi \circ F_* U$  composed with the quotient map $R^n \rightarrow R^n/V$ does not vanish. In addition, $R^n /V$ has the same annihilator as its Matlis dual $E(V)$.

Thus, if one is given a the near-splitting $\Phi \circ F_* U$, finding all prime annihilators of submodules $V$ of $R^n$ for which the near-splitting $\Phi \circ F_* U$  composed with the quotient map $R^n \rightarrow R^n/V$ does not vanish, is equivalent to finding all annihilators of submodules $E(V)$ of $E^n$ fixed under the Frobenius map $U^t T$, and this we can do by applying the algorithm described in section 6, provided that the resulting Frobenius map is not nilpotent.

We conclude by re-interpreting the examples from section \ref{Section: The algorithm in action and two calculations}
in the context of Frobenius near-splittings.

\begin{ex}
Let
$R=\mathbb{Z}/2\mathbb{Z}[x,y,z]$, let
$$U=\left( \begin{array}{cc} x^3+y^3+z^3 & xy^2z^5 \\ x(y^2 + z^2) & x^3 \end{array} \right)$$
and consider the Frobenius near-splitting $\phi=(\pi\oplus\pi) \circ U$.
In the first example of section \ref{Section: The algorithm in action and two calculations}
we found special primes $P_0=0$, $P_1=(x, y+z) R$ and $P_2=(x,y,z)R$. These now give us the following three submodules
$V_1=0$,
$$V_2=\left(P_1{R^2}\right)^{\star U}=
\Image
\left( \begin{array}{ccccc}
x & y & z & 0 & 0\\
0 & 0 & 0 & x & y+z\\
\end{array}
\right)
$$
and
$V_3=\left((x,y,z) R^2\right)^{\star U}=(x,y,z) {R^2}$
of $F_* R^2$ which are
compatible with $\phi$
and such that $R^2/V_i$ has annihilator $P_i$ for $i=0,1,2$.
\end{ex}

\begin{ex}
Let $R=\mathbb{Z}/2\mathbb{Z}[x,y,z]$, $f=x^3+y^3+z^3$, $g=x^2+z^4$, define
$$U=\left( \begin{array}{cc} x f & y f \\ x g & y g \end{array} \right)$$
and consider the Frobenius near-splitting $\phi=(\pi\oplus\pi) \circ U$.
In the second example of section \ref{Section: The algorithm in action and two calculations}
we found special primes $P_0=0$, $P_1=(x,z)$ and $P_2=(x,y,z)$.
These now give us the following three submodules
$V_1=0$,
$$V_2=\left(P_1{R^2}\right)^{\star U}=
\Image
\left( \begin{array}{ccccc}
x & y & z & 0 & 0\\
0 & 0 & 0 & x & z\\
\end{array}
\right)
$$
and
$V_3=\left((x,y,z) R^2\right)^{\star U}=(x,y,z) {R^2}$
of $F_* R^2$ which are
compatible with $\phi$
and such that $R^2/V_i$ has annihilator $P_i$ for $i=0,1,2$.
\end{ex}

\end{document}